\newtheorem{definition}{Definition}
\newtheorem{proposition}{Proposition}
\newtheorem{corollary}{Corollary}
\newtheorem{theorem}{Theorem}
\newtheorem{lemma}{Lemma}
\author[1]{Matthieu Porte\thanks{matthieu.porte@yahoo.fr}\thanks{This work was conducted in 2017 as a MSc thesis at UPMC, under the direction of V. Baladi  (CNRS / IMJ-PRG), whom I thank for her guidance and her useful remarks at all stages of this work. I also thank P-A. Guihéneuf (UMPC / IMJ-PRG) for his helpful comments.}}
\date{}
\affil[1]{Département de Mathématiques et Applications, École Normale Supérieure (Paris)}
\begin{document}
	\title{Linear response for Dirac observables of Anosov diffeomorphisms}
	\maketitle
	
	\begin{abstract}
		We consider a $\mathcal{C}^3$ family $t\mapsto f_t$ of $\mathcal{C}^4$ Anosov diffeomorphisms on a compact Riemannian manifold $M$. Denoting by $\rho_t$ the SRB measure of $f_t$, we prove that the map $t\mapsto\int \theta  d\rho_t$ is differentiable if $\theta$ is of the form $\theta(x)=h(x)\delta(g(x)-a)$, with $\delta$ the Dirac distribution, $g:M\rightarrow \mathbb{R}$ a $\mathcal{C}^4$ function, $h:M\rightarrow\mathbb{R}$ a $\mathcal{C}^3$ function and $a$ a regular value  of $g$. We also require a transversality condition, namely that the intersection of the support of $h$ with the level set  $\{g(x)=a\} $ is foliated by 'admissible stable leaves'.
	\end{abstract}

	\section{Introduction}

		Consider a physical system described by a state $x$ on a compact Riemannian manifold $M$, whose evolution is given by a smooth discrete-time dynamical system $f$ on $M$. In order to study the asymptotic behavior of the system, one is often interested in the asymptotic mean value of an \textit{observable} of the system, that is of a function $\Phi: M \rightarrow\mathbb{R}$, and thus in studying the quantity \begin{equation}\label{birkhoff_sum}\dfrac{1}{n}\sum_{k=0}^{n-1} \Phi(f^k(x))\end{equation} and its limit as $n\rightarrow +\infty$.
		
		In the context of chaotic dynamics, on an actual physical system subject to uncertainties in the measurement of the state of the system, one is generally unable to compute explicitly the orbit of $x$. Yet, ergodic theory \cite{walters_book} allows one to further study (\ref{birkhoff_sum}). If $\rho$ is an ergodic $f$-invariant probability and if $\Phi\in L^1(d\rho)$, then, by Birkhoff's ergodic theorem, for $\rho$-almost all $x\in M$ :
		\begin{equation}
			\label{ergodic_theorem}
			\lim_{n\rightarrow\infty}\dfrac{1}{n}\sum_{k=0}^{n-1} \Phi(f^k(x)) = \int_M \Phi d\rho \ .
		\end{equation}
		
		Therefore, one may think of $\rho$ as the asymptotic state of the system starting from $\rho$-almost every point, subject to $f$. 
		Yet, a given dynamical system $f$ may have multiple ergodic invariant measures. 
		
		This leads to the following question: what are 'natural' invariant measures representing the state of our system ?
		
		Since $M$ is a Riemannian manifold, Lebesgue measure on $M$ is especially important: sets with Lebesgue positive measure are sets one may physically observe. 
	    The set of points such that (\ref{ergodic_theorem}) holds is called the \textit{basin of attraction} of $\rho$.
	    One way to answer our latter question is thus to require that the ergodic measure we investigate has a basin of attraction of full --- or at least positive --- Lebesgue measure. This line of thought led to the notion of \textit{SRB measure} --- short for Sinai-Ruelle-Bowen, who characterized this measure in the late 1960s and early 1970s. See for example \cite{ruelle_76} for an historical definition, or \cite{srb_young} for a contemporary review. SRB measures play a key role in non-equilibrium statistical mechanics where they represent non-equilibrium states of a system \cite{ruelle09}. We define formally SRB measures in Section \ref{transfer_operator_ss} in the context of Anosov diffeomorphisms.
	    
	    By the ergodic theorem, $\rho$ is SRB (in the sense given in Theorem \ref{srb_theorem} of section \ref{transfer_operator_ss}) if it is ergodic and absolutely continuous with respect to Lebesgue measure, yet there are other measures for which this definition holds. In the case of hyperbolic dynamics, this notion is especially useful, since one may not generally find an absolutely continuous ergodic measure.
	    
	    The framework of linear response aims to describe what happens to asymptotic states when the system is subject to a small perturbation. Consider that the system is subject to a durable perturbation. That is to say, that there is a small vector field $X$ on $M$ such that for each $n\geq 0,$ $x_n=f(x_{n-1}) + X(f(x_{n-1}))$, where $x_n$ is the perturbed orbit. What happens to the SRB measure of our system upon such a perturbation of the dynamics $f$?
	    
	    This question is of practical interest. For example, in the field of climate science, one is interested in evaluating the evolution of temperature across the globe under a durable perturbation of the concentration of carbonic dioxide in the atmosphere. See \cite{lucariniclimate17} for a contemporary example.
	    
	    Over the last decades, substantial progress has been made on a functional approach to linear response.
	    The functional approach takes roots in the following observation: physical invariant measures may be seen as the fixed points of an operator acting on functional spaces, the Ruelle-Perron-Frobenius transfer operator, formally defined by:
	    \begin{equation}
		    \mathcal{L}\phi(x)=\sum_{f(y)=x} \dfrac{\phi(y)}{|\mathrm{Det}D_yf|}.
	    \end{equation}
	    
	    Therefore, one is led to construct adapted Banach spaces for $\mathcal{L}$ to act on. An ideal space would yield a spectrum composed of $1$ as a simple maximal isolated eigenvalue, without any other peripheral eigenvalue, and with its spectrum outside a disc of radius $r<1$  constituted only of eigenvalues of finite multiplicity (the latter property being called a \textit{spectral gap} for $\mathcal{L}$). In \cite{baladiicm} is an illustration of how those spectral properties lead to linear response. 
	    
	    In the case of expanding dynamics, $\mathcal{L}$ has a smoothing effect and one may study $\mathcal{L}$ as an operator on spaces $\mathcal{C}^r(M)$ of smooth functions \cite{baladibook94, baladiicm}. In the case of hyperbolic dynamics, $\mathcal{L}$ no longer has a spectral gap on those spaces and one is led to consider spaces of anisotropic distributions for $\mathcal{L}$ to have a spectral gap. In \cite{gl06}, a geometric approach based on strictly invariant cones in the tangent space is developed, while in \cite{baladi_tsuji} a microlocal approach is developed, generalizing the usual Sobolev spaces. A review of those may be found in \cite{baladi17} or in the recent book \cite{baladibook16}. 
	    
	    If one manages to compute the expansion to first order of the SRB measure in a suitable Banach space $\mathcal{B}$ under a perturbation of $f$, then linear response holds for observables in the dual space $\mathcal{B}^*$.
	    
	    In \cite[§3]{bkl16}, the authors rely on the spaces $\mathcal{B}^{u,s}$ introduced by Gouëzel and Liverani in \cite{gl06} to prove linear response for a non-smooth observable. More precisely, differentiability of the map $t\mapsto\int h(x)\Theta(g(x)-a))d\rho_t$ is proven for a $\mathcal{C}^3$ family of $\mathcal{C}^4$ diffeomorphisms $f_t$ with transitive compact hyperbolic attractors, with $\rho_t$ the associated family of SRB measures,  and observables of the form $h(x)\Theta(g(x)-a)$ where $h:M\rightarrow\mathbb{R}$ is a $\mathcal{C}^3$ function, $\Theta$ is the Heaviside function and $g:M\rightarrow\mathbb{R}$ is a $\mathcal{C}^4$ function having $a\in\mathbb{R}$ as  a regular value. The assumption that  $\mathcal{W}_a = \{g(x)=a\}\cap\mathrm{supp}(h)$ is foliated by admissible stable leaves --- that is by submanifolds close to the stable manifold --- is also required. This latter assumption can be viewed as a transversality condition. The Heaviside function allows to study the response of regions of $M$ on level sets of $g$ above a given threshold.
	    
	    Under weaker assumptions --- that is, if $\beta\in(0,1)$, for a  $\mathcal{C}^{2+\beta}$ family of $\mathcal{C}^3$ diffeomorphisms with transitive compact hyperbolic attractors and if $g:M\rightarrow\mathbb{R}$ is $\mathcal{C}^2$ and $a\in\mathbb{R}$ is a regular value of $g$ --- the authors of \cite{bkl16} are still able to prove \textit{fractional response} for the observable $\theta(x) = \Theta(g(x) -a)$, that is to say that  $t\mapsto \int \theta d\rho_t$ is Hölder for some suitable Hölder exponent. Note that this result, weaker than linear response, does not require any transversality condition for the level set $\{g = a\}$. Thus it is typically valid for the level sets in the neighbourhood of a local maximum or minimum of $g$.
	    
	    The main result of this paper, Theorem \ref{linear_response}, extends the first of those results to the case of an observable of the form \begin{equation}\label{theta_form}h(x)\delta(g(x)-a) \ ,\end{equation} where $\delta$ is the Dirac distribution, without more regularity required. We also show linear response for observables based on derivatives of the Dirac distribution, up to increasing the regularity conditions. 
	    
	    To avoid some technical difficulties, we will limit ourselves to the case of transitive Anosov diffeomorphisms. The paper is organized as follows: in Section \ref{bus_spaces_s} we recall the definition of the spaces $\mathcal{B}^{u,s}$, the definition of the transfer operator, some of its properties on those spaces, and a result of differentiability of the SRB measure. In Sections \ref{dimension 2} and \ref{General case} we show that observables of the form (\ref{theta_form}) are in the dual of suitable $\mathcal{B}^{u,s}$ spaces and derive a corresponding linear response result. We first present the case where $M$ is of dimension 2 in Theorem \ref{linear_response_dim2}, which makes the argument simpler, before proceeding to the general case in Theorem \ref{linear_response}. Finally, in Section \ref{Applications} we illustrate our results by constructing examples of functions $g$ and $h$ for which linear response holds in the case of perturbations of a linear hyperbolic automorphism on the 2-torus --- the classical 'cat map'.

	\section{Transfer operators and the anisotropic spaces $\mathcal{B}^{u,s}$}
	\label{bus_spaces_s}
	
	Let $M$ be a compact Riemannian manifold of dimension $d\geq 2$. We first recall the definition of an Anosov diffeomorphism.
	
	\begin{definition}
		\label{Anosov_def}
		Let $f:M\rightarrow M$ be a $\mathcal{C}^1$ diffeomorphism of $M$. It is called an \textbf{Anosov diffeomorphism} if there is a splitting of the tangent space as a Whitney sum $TM = E^s \oplus E^u$, constants $0<\nu<1$ and $1<\lambda<\infty$ and a constant $c>0$ such that:
		\begin{itemize}
			\item $\forall x\in M, Df_xE^s(x)=E^s(f(x))$ and $Df_xE^u(x)=E^u(f(x))$
			\item $\forall w\in  E^s, \forall n > 0, \|Df^nw\|\leq c\nu^n\|w\|$
			\item $\forall v\in  E^u, \forall n > 0, \|Df^nv\|\geq c\lambda^n\|v\|$
		\end{itemize}
		$E^s$ and $E^u$ are respectively called the \textbf{stable} and \textbf{unstable bundles}, and their dimensions are respectively noted $d_s$ and $d_u$. If $x\in M$, $E^s(x)$ and $E^u(x)$ are called respectively the \textbf{stable} and \textbf{unstable directions} at $x$.
	\end{definition}
	
	Let $r\geq 3$ be a real number, $t\mapsto f_t$ a $\mathcal{C}^r$ family of $\mathcal{C}^{r+1}$ transitive Anosov diffeomorphisms, for $t\in (-\epsilon_0 ,\epsilon_0)=:I_0$. Let $X_t$ be the family of vector fields defined by $\partial_t f_t(x)= X_t \circ f_t(x)$, i.e. $X_t (x)= (\partial_t f_t) \circ f_t^{-1}(x)$ for all $x\in M$.
	Let $TM = E^s \oplus E^u$ be the splitting of the tangent bundle in stable and unstable directions for $f_0$, and $d_s, d_u$ respectively the stable and unstable bundles. Let $\lambda > 1$ be the weakest asymptotic expansion rate along the unstable directions and $\nu < 1$ the weakest asymptotic contraction rate along the stable directions for $f_0$.
		
	 Following the lines of \cite{bkl16}, our goal is to show linear response for Dirac observables and their derivatives.
	 
	\begin{definition}
	 		\label{dirac_observable_def}
	 		If $N\subset M$ is an embedded submanifold with boundary, the \textbf{Dirac distribution} $\delta_N$ on $N$ is the distribution acting on continuous functions by $$\langle \delta_N, \phi\rangle = \int_N \phi d\mu_N,$$ with $d\mu_N$ the Lebesgue measure on $N$.
	 		
	 		A \textbf{Dirac observable} is a distribution $h\delta_N$, with $h\in\mathcal{C}^r(M)$, acting on continuous functions by $$\langle h\delta_N, \phi\rangle = \int_N h\phi d\mu_N \ .$$	
	\end{definition}
	
	We will use the spaces $\mathcal{B}^{u,s}$, constructed in \cite{gl06},  adapted to the family $f_t$ (up to restricting our parameter to a smaller neighbourhood of $0$) . 
	
	\subsection{Admissible stable leaves}
		\label{admissible_stable_leaves_ss}
	In order to define the spaces $\mathcal{B}^{u,s}$, we recall from \cite{gl06} the definition of the set $\Sigma$ of admissible stable leaves. Those are small embedded submanifolds locally close to stable manifolds.
	
	Without loss of generality, the metric on $M$ is replaced by a metric \emph{à la Mather} \cite{mather68} and we hence assume that $Df_0$ has expansion rate stronger than $\lambda$, contraction rate lower than $\nu$ and that the angle between stable and unstable directions is everywhere arbitrarily close to $\pi / 2$.
	For small enough $\kappa >0$, we define the stable cone at $x\in M$ by
	\begin{equation}
		\mathcal{C}^s(x) = \{w+v\in T_xM ~ | ~ w\in E^s_x, v\perp E^s_x, \|v\|\leq \kappa\|w\|\}.	
	\end{equation}
	
	If $\kappa$ is small enough, $D_xf_0^{-1}(\mathcal{C}^s(x))\setminus\{0\}$ belongs to the interior of $\mathcal{C}^s(f_0^{-1}(x))$ and $D_xf_0^{-1}$ expands the vectors in $\mathcal{C}^s(x)$ by $\nu^{-1}$.
	
	There exist real numbers $\tau_i\in(0,1)$ and $\mathcal{C}^{r+1}$ coordinate charts $\psi_1,...,\psi_N$ with $\psi_i$ defined on $(-\tau_i,\tau_i)^d\subset \mathbb{R}^d$ (with its Euclidean norm) such that $M$ is covered by the open sets $\psi_i((-\tau_i/2,\tau_i/2)^d)_{i=1...N}$, and the following conditions hold :
	\begin{itemize}
		\item $D\psi_i(0)$ is an isometry;
		\item $D\psi_i(0).(\mathbb{R}^{d_s}\times\{0\}) = E^s(\psi_i(0)) \ $;
		\item The $\mathcal{C}^{r+1}$ norms of $\psi_i$ and its inverse are bounded by $1 + \kappa$.
	\end{itemize}

	The $\psi_i$ are called \emph{admissible charts}.
	
	We then may pick $c_i\in(\kappa , 2\kappa)$ such that the corresponding stable cone in charts
	
	\begin{equation*}
		\mathcal{C}_i^s = \left\{w+v\in\mathbb{R}^d ~| ~ w\in\mathbb{R}^{d_s}\times\{0\}, v\in\{0\}\times\mathbb{R}^{d_u}, \|v\|\leq c_i\|w\|\right\}
	\end{equation*}
	satisfies $D_x\psi_i(\mathcal{C}_i^s)\supset\mathcal{C}^s(\psi_i(x))$ and $D_{\psi_i(x)}f_t^{-1}(D\psi_i(x)\mathcal{C}_i^s)\subset\mathcal{C}^s(f_t^{-1}(\psi_i(x)))$ for any $x\in(-\tau_i,\tau_i)^d$ and all $t\in I_0$, up to restricting our interval $I_0$.
	
	Let $G_i(K)$ be the set of $\mathcal{C}^{r+1}$ maps $\chi : U_{\chi} \rightarrow (-\tau_i,\tau_i)^{d_u}$ defined on a subset $U_{\chi}$ of $(-\tau_i,\tau_i)^{d_s}$, with $|D\chi|<c_i$ and $|\chi|_{\mathcal{C}^{r+1}}\leq K$. In particular, the tangent space to the graph of $\chi$ belongs to the interior of the cone $\mathcal{C}_i^s$.
	
	As shown in \cite{gl06}, uniform hyperbolicity of $f_0$ implies that if $K$ is large enough, then there exists $K'<K$ such that for any $W\in G_i(K)$ and any $1\leq j\leq N$, we have $\psi_j^{-1}(f_t^{-1}(\psi_i(W)))\in G_i(K')$, for all $t\in I_0$, restricting $I_0$ again if necessary.
	
	Furthermore, we assume $\kappa$ is small enough so that $\nu(1+\kappa)^2\sqrt{1+4\kappa^2} < 1$. Let $K_1>1$ be such that $K_1 > 1+K_1\nu(1+\kappa)^2\sqrt{1+4\kappa^2}$
	
	\begin{definition}
	\label{admissible_stable_leaves}
	An \textbf{admissible graph} is a map $\chi$ defined on a ball 
	\begin{equation*}
		\overline{B}(w,K_1\delta)\subset (-2\tau_i/3,2\tau_i/3)^{d_s}
	\end{equation*}	
	for some $\delta >0$ such that $6K_1\delta < \min_i(\tau_i)$ and some $w\in (-2\tau_i/3,2\tau_i/3)^{d_s}$, with $\mathrm{range}(\mathrm{Id_{\mathbb{R}^{d_s}}},\chi)\in G_i(K)$ and taking its values in $(-2\tau_i/3,2\tau_i/3)^{d_u}$ .
	
	The set of \textbf{admissible stable leaves} is 
	\begin{align*}
	\Sigma := &\left\{\psi_i\circ(\mathrm{Id_{\mathbb{R}^{d_s}}},\chi)(\overline{B}(w,\delta)) ~| ~ \chi: \overline{B}(w,K_1\delta)\rightarrow \mathbb{R}^{d_u} \right.\\
	& \left. \text{ is an admissible graph on } (-2\tau_i/3,2\tau_i/3)^{d_s}\right\} \ .
	\end{align*}
	\end{definition}

	\subsection{Norm on $\mathcal{B}^{u,s}$}
	 Recall that $r\geq 3$.
	 If $W\in\Sigma$, let $\mathcal{V}^r(W)$ be the set of $\mathcal{C}^r$ vector fields on a neighbourhood of W. For $q\leq r$, let $\mathcal{C}_0^q(W)$ be the set of $\mathcal{C}^q$ functions on $W$ that vanish on a neighbourhood of the boundary of $W$.
	
	If $u\in\mathbb{N}$ and $s>0$ are such that $u+s<r$ and if $\phi\in\mathcal{C}^{u}$, we define 
	\begin{align*}
		\|\phi\|_{\mathcal{B}^{u,s}} &= \sup_{0\leq p\leq u} \|\phi\|_{p, s+p} \\
		&= \sup_{0\leq p\leq u} \sup_{W\in\Sigma}\sup_{\substack{v_1,...,v_p\in\mathcal{V}^r(W)\\ |v_i|_{\mathcal{C}^r\leq 1}}}\sup_{\substack{h\in\mathcal{C}_0^{p+s}(W) \\ |h|_{\mathcal{C}^{p+s}}\leq 1}} |\int_W (\partial_{v_1}...\partial_{v_p}\phi). h |.
	\end{align*}
	
	The space $\mathcal{B}^{u,s}$ is the completion of $\mathcal{C}^r(M)$ for the norm $\|.\|_{\mathcal{B}^{u,s}}$. It is straightforward from the definition of the norm on $\mathcal{B}^{u,s}$ that it is actually a space of distributions of order at most $s$. In Section 4 of \cite{gl06}, it is shown that this injection is continuous.
	
	\subsection{Transfer operator and spectral gap}
	\label{transfer_operator_ss}
	For $t\in I_0$, we define the transfer operator $\mathcal{L}_t$ on $\mathcal{C}^r(M)$ by the following formula, with $\phi \in \mathcal{C}^r(M)$ and $x\in M$
	
	\begin{equation*}
		\mathcal{L}_t\phi(x) = \frac{\phi\circ f_t^{-1}(x)}{|\mathrm{Det}Df_t(f_t^{-1}(x))|}.
	\end{equation*}
	where $\mathrm{Det}Df_t$ is the Jacobian determinant of $f_t$ with respect to the Lebesgue measure.
	
	By the change of variables formula, for $\phi \in\mathcal{C}^r(M)$ and $\psi \in \mathcal{C}^0(M)$ :
	\begin{align*}
	\langle  \psi, \mathcal{L}\phi\rangle &= \int_M \mathcal{L}\phi(x) \psi(x)dx= \int_M \phi(y) \psi\circ f(y) dy= \langle \psi\circ f, \phi\rangle.
	\end{align*}
	with $dx$ and $dy$ denoting the Lebesgue measure on $M$.	
	
	We recall the definition of the essential spectral radius of an operator.
	\begin{definition}
	Let $\mathcal{L}:\mathcal{B}\rightarrow\mathcal{B}$ be a bounded operator on a Banach space. The \textbf{essential spectral radius} of $\mathcal{L}$ (on $\mathcal{B}$), denoted $r_{ess}(\mathcal{L}|_{\mathcal{B}})$ (or $r_{ess}(\mathcal{L})$ if there is no ambiguity), is the smallest real number $\tau$ such that the spectrum of $\mathcal{L}$ outside of the disc of center $0$ and radius $\tau$ consists of isolated eigenvalues of finite multiplicity.
	\end{definition} 
	That is to say: up to the spectrum of modulus smaller than the essential spectral radius, the operator $\mathcal{L}$ behaves 'as' a compact operator.
	
	We state the definition of the SRB measure of a transitive Anosov diffeomorphism.
	\begin{theorem}\cite{srb_young}
		\label{srb_theorem}
		Let $M$ be a compact Riemannian manifold and let $f:M\rightarrow M$ be a $\mathcal{C}^2$ transitive Anosov diffeomorphism on $M$. Then there is a unique  $f$-invariant measure $\rho$ characterized by the following equivalent conditions:
		\begin{itemize}
			\item $\rho$ has absolutely continuous (w.r.t Lebesgue measure) conditional measures on unstable manifolds;
			\item There is a set $V\subset M$ of full Lebesgue measure such that for every $x\in V$, in the weak-$\ast$ topology: 
			\begin{equation}
			\lim_{n\rightarrow\infty} \dfrac{1}{n}\sum_{k=0}^{n-1}\delta_{f^k(x)} = \rho \ .
			\end{equation}
		\end{itemize}
		This measure is called the \textbf{SRB measure} of $f$.
	\end{theorem}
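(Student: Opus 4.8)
The plan is to realise $\rho$ as a fixed point of the transfer operator and then to verify the two geometric characterisations. First I would establish quasi-compactness of $\mathcal{L}$: on an anisotropic space such as $\mathcal{B}^{u,s}$ a Lasota--Yorke type inequality shows that $\mathcal{L}$ has spectral radius $1$ and essential spectral radius strictly below $1$, so its peripheral spectrum consists of finitely many eigenvalues of finite multiplicity. Taking $\psi\equiv 1$ in the duality identity $\langle\psi,\mathcal{L}\phi\rangle=\langle\psi\circ f,\phi\rangle$ shows that $\mathcal{L}$ preserves total mass, whence $1$ is an eigenvalue. Transitivity of $f$ then forces $1$ to be simple: a nonnegative fixed distribution must have full support, and any two such are proportional. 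Normalising the corresponding eigendistribution gives a candidate probability measure $\rho$, its positivity following from the positivity of $\mathcal{L}$.

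Next I would extract the geometric content of $\rho$. Iterating $\mathcal{L}^n$ on a smooth density spreads mass along unstable directions while contracting along stable ones; the bounded-distortion estimate for the unstable Jacobian --- available because $f$ is $\mathcal{C}^2$, which gives H\"older control of the logarithm of that Jacobian --- keeps the logarithms of the conditional densities uniformly bounded. Passing to the limit, the conditional measures of $\rho$ on unstable plaques are absolutely continuous with respect to the induced Lebesgue measure, which is the first characterisation.

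For the basin property I would run the Hopf argument. Absolute continuity of the stable foliation (Anosov's theorem) transfers the convergence of Birkhoff averages from the unstable plaques, where the absolutely continuous conditional measures of $\rho$ make it visible, to a set of full Lebesgue measure along nearby stable leaves. Combined with ergodicity of $\rho$ --- which follows from transitivity together with the local product structure of the hyperbolic set --- this yields weak-$\ast$ convergence of $\frac1n\sum_{k=0}^{n-1}\delta_{f^k(x)}$ to $\rho$ for Lebesgue-almost every $x$. Uniqueness is then immediate, since any measure with the basin property is pinned down by this almost-everywhere limit; the two conditions are equivalent because the Hopf argument runs in both directions.

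I expect the main obstacle to be the absolute continuity of the invariant foliations, on which both characterisations rest. Showing that the stable holonomies between unstable plaques are absolutely continuous with controlled Jacobians is the technical heart of the matter, and it is exactly this input that makes the distortion and Hopf arguments work; in the functional-analytic formulation the same difficulty reappears, repackaged as the Lasota--Yorke inequality that controls the essential spectral radius of $\mathcal{L}$ on $\mathcal{B}^{u,s}$.
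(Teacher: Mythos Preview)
The paper does not prove this theorem: it is stated as a background result and attributed to \cite{srb_young}, with no argument given in the text. There is therefore no ``paper's own proof'' to compare your sketch against.

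That said, a word of caution about your outline in the context of this paper. You propose to construct $\rho$ as the fixed point of $\mathcal{L}$ on an anisotropic space $\mathcal{B}^{u,s}$ and to read off the geometric characterisations from there. In the paper's logical order, however, the spectral properties of $\mathcal{L}$ on $\mathcal{B}^{u,s}$ (Proposition~\ref{bus_prop}) are themselves quoted from \cite{gl06}, and the identification of the leading eigenvector with the SRB measure already presupposes Theorem~\ref{srb_theorem} as a definition. Using the anisotropic machinery to \emph{prove} Theorem~\ref{srb_theorem} would thus be circular here. The classical route --- Markov partitions or specification to build an equilibrium state with absolutely continuous conditionals, bounded distortion from the $\mathcal{C}^2$ hypothesis, then the Hopf argument via absolute continuity of the stable holonomy --- is self-contained and is essentially what the cited reference does. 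Your second and third paragraphs follow that route correctly; only the first paragraph imports tools that, in this paper, sit logically downstream of the statement you are trying to establish.
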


	We recall several facts about the spaces $\mathcal{B}^{u,s}$ we will use in the next proposition. All of them are proved in \cite{gl06}, using that $f_t$ is an Anosov transitive diffeomorphism and thus mixing. Thus, we will not prove this proposition, but we provide and admit intermediate results needed for the reader to prove it.
	\begin{proposition}
		\label{bus_prop}\cite{gl06}
		Let $r\geq 3$ be a real number.
		Let $0<u+s<r$, with $s>0$ and $u\in\mathbb{N}$.		
		The transfer operator $\mathcal{L}_t$ admits a unique bounded extension to $\mathcal{B}^{u,s}$. It has a spectral radius of $1$. 
		Assume further that $u\geq 1$.	Then $\mathcal{L}_t$ has essential spectral radius strictly smaller than $1$.
		The only eigenvalue of $\mathcal{L}_t$ of modulus $1$ is $1$ ; it is a simple eigenvalue. Its eigenvector $\rho_t \in \mathcal{B}^{u,s}$ is actually a nonnegative measure, which is the SRB measure of $f_t$.
	\end{proposition}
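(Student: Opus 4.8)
The plan is to assemble the Gou\"ezel--Liverani machinery, treating as given the two intermediate facts the authors admit: the compactness of the embeddings $\mathcal{B}^{u,s}\hookrightarrow\mathcal{B}^{u-1,s+1}$ and the invariance of the family $\Sigma$ under pullback by $f_t$ (every $f_t^{-1}(W)$ with $W\in\Sigma$ is covered by finitely many leaves of $\Sigma$). First I would establish boundedness through a \emph{basic inequality}. Fixing $W\in\Sigma$, test vector fields $v_1,\dots,v_p\in\mathcal{V}^r(W)$ and a test function $h\in\mathcal{C}_0^{p+s}(W)$, I would rewrite $\int_W(\partial_{v_1}\cdots\partial_{v_p}\mathcal{L}_t\phi)\,h$ using the change of variables underlying $\mathcal{L}_t$. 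The pullback decomposes into finitely many integrals over leaves of $\Sigma$ of derivatives of $\phi$ against test functions of comparable norm, with the derivatives of $f_t$ and of the Jacobian absorbed into constants. Commuting the $\partial_{v_i}$ through $f_t^{-1}$ produces extra terms in which some derivatives fall on the coordinate change; these are of strictly lower order in $p$ and are already controlled by the $\|\cdot\|_{p',s+p'}$ with $p'<p$ appearing in the norm. This yields $\|\mathcal{L}_t\phi\|_{\mathcal{B}^{u,s}}\le C\|\phi\|_{\mathcal{B}^{u,s}}$ for every admissible $(u,s)$ (including $u=0$), hence a unique bounded extension by density of $\mathcal{C}^r(M)$.

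Next I would iterate the basic inequality, tracking the geometric factors, to prove a Lasota--Yorke (two-norm) estimate
$$\|\mathcal{L}_t^n\phi\|_{\mathcal{B}^{u,s}}\le C\,\sigma^n\|\phi\|_{\mathcal{B}^{u,s}}+C_n\|\phi\|_{\mathcal{B}^{u-1,s+1}},$$
valid for $u\ge1$, with some $\sigma<1$ of the order of $\max(\lambda^{-u},\nu^{s})$. The gain $\sigma^n$ comes from two mechanisms: pushing a derivative along the expanding direction back by $f_t^{-1}$ costs $\lambda^{-1}$ per unit of the $u$ regularity, while the transverse $s$-regularity of the test functions contracts like $\nu^{s}$ as admissible leaves are drawn toward the stable foliation under the dynamics. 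The remainder is measured in the weaker norm $\mathcal{B}^{u-1,s+1}$, which is precisely where the compact-embedding hypothesis $u\ge1$ enters.

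With the compact inclusion in hand, Hennion's theorem applied to this Lasota--Yorke inequality gives $r_{ess}(\mathcal{L}_t|_{\mathcal{B}^{u,s}})\le\sigma<1$, so $\mathcal{L}_t$ is quasi-compact. For the spectral radius, the duality $\langle\psi,\mathcal{L}_t\phi\rangle=\langle\psi\circ f_t,\phi\rangle$ yields $\int_M\mathcal{L}_t\phi\,dx=\int_M\phi\,dx$; testing against constants shows $1$ lies in the spectrum, so the spectral radius is at least $1$, and the basic inequality furnishes the matching upper bound, giving spectral radius exactly $1$ (this argument does not require $u\ge1$). Quasi-compactness then reduces the spectrum on and outside the unit circle to finitely many eigenvalues of finite multiplicity. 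A Ces\`aro limit of $\mathcal{L}_t^n\mathbf{1}$ produces a nonnegative fixed element $\rho_t\in\mathcal{B}^{u,s}$, which is a measure since $\mathcal{B}^{u,s}$ embeds in distributions of order at most $s$ and $\rho_t$ is a positive functional on continuous functions.

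The last and most delicate point is to rule out peripheral eigenvalues other than $1$ and to show $1$ is simple. Here I would invoke topological transitivity: a transitive Anosov diffeomorphism is mixing for its SRB measure, which forbids a nontrivial cyclic structure on the peripheral spectrum and forces the fixed space to be one-dimensional, via a Perron--Frobenius/positivity argument on the cone of nonnegative elements of $\mathcal{B}^{u,s}$. Finally I would identify $\rho_t$ with the SRB measure of Theorem \ref{srb_theorem}: iterating $\mathcal{L}_t$ on a smooth probability density and passing to the weak-$*$ limit realizes, in duality, the empirical averages $\tfrac1n\sum_{k=0}^{n-1}\delta_{f_t^k(x)}$, so $\rho_t$ has absolutely continuous conditional measures on unstable manifolds. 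I expect this mixing/positivity step to be the genuine obstacle; the Lasota--Yorke inequality, though lengthy, is routine once the cone invariance of $\Sigma$ is taken as given.
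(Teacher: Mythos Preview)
Your proposal is correct and follows essentially the same route the paper sketches: the paper does not actually prove Proposition~\ref{bus_prop} but instead admits the three intermediate results (Hennion's theorem, the compact embedding $\mathcal{B}^{u,s}\hookrightarrow\mathcal{B}^{u-1,s+1}$, and the Lasota--Yorke inequality) from \cite{gl06} and leaves their assembly to the reader. Your outline supplies precisely that assembly, together with a sketch of the Lasota--Yorke derivation and the mixing/positivity argument for the peripheral spectrum that the paper only alludes to in its one-line remark about transitivity implying mixing.
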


	The following result is based on an application of the Nussbaum formula \cite{nussbaum69} for the essential spectral radius
	\begin{lemma}(Hennion's theorem) \cite{hennion93}
		\label{nussbaum}
		Let	$(\mathcal{B},\|.\|)$ and $(\mathcal{B}',\|.\|')$ be two Banach spaces such that $\mathcal{B}\hookrightarrow\mathcal{B}'$ compactly. Let $\mathcal{L}:\mathcal{B}\rightarrow \mathcal{B}$ be a bounded operator (for the norm $\|.\|$).
		Assume that there exist two sequences of real numbers $\beta_n$ and $B_n$ such that, for any $n\geq 1$ and any $\phi\in\mathcal{B}$
		\begin{equation}
			\|\mathcal{L}^n\phi\|\leq \beta_n\|\phi\| + B_n\|\phi\|' .
		\end{equation}
		Then the essential spectral radius of $\mathcal{L}$ on $\mathcal{B}$ is not larger than $\underset{n\rightarrow\infty}{\mathrm{liminf}}(\beta_n)^{1/n}$.
	\end{lemma}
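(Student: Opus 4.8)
The plan is to deduce the bound from Nussbaum's characterization of the essential spectral radius via the Kuratowski measure of non-compactness, which is the external input referred to in the sentence preceding the statement. For a bounded set $A\subset\mathcal{B}$ write $\alpha(A)$ for the infimum of those $d>0$ such that $A$ can be covered by finitely many subsets of $\mathcal{B}$ of $\|\cdot\|$-diameter at most $d$, and set $\|T\|_\alpha=\alpha(T(B_1))$, where $B_1$ denotes the closed unit ball of $\mathcal{B}$. The quantity $\|\cdot\|_\alpha$ is submultiplicative, so $\lim_{n\to\infty}\|\mathcal{L}^n\|_\alpha^{1/n}$ exists by Fekete's subadditivity lemma, and Nussbaum's formula identifies this limit with $r_{ess}(\mathcal{L})$. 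It therefore suffices to bound $\|\mathcal{L}^n\|_\alpha$ for each fixed $n$ and then take $n$-th roots.

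First I would estimate $\|\mathcal{L}^n\|_\alpha$ by combining the compact embedding with the hypothesis. Fix $n$ and $\epsilon>0$. Since $\mathcal{B}\hookrightarrow\mathcal{B}'$ is compact, the unit ball $B_1$ is totally bounded for $\|\cdot\|'$, so there exist finitely many $\phi_1,\dots,\phi_K\in B_1$ with $B_1\subset\bigcup_{i=1}^K\{\phi:\|\phi-\phi_i\|'\leq\epsilon\}$. Setting $A_i=B_1\cap\{\phi:\|\phi-\phi_i\|'\leq\epsilon\}$, the image $\mathcal{L}^n(B_1)$ is covered by the finitely many sets $\mathcal{L}^n(A_i)$, and it remains to control their $\mathcal{B}$-diameters.

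The key step is this diameter estimate. For $\phi,\psi\in A_i$ one has $\|\phi-\psi\|\leq 2$ and $\|\phi-\psi\|'\leq 2\epsilon$, so applying the hypothesized inequality to $\phi-\psi$ gives
\begin{equation*}
\|\mathcal{L}^n\phi-\mathcal{L}^n\psi\|\leq\beta_n\|\phi-\psi\|+B_n\|\phi-\psi\|'\leq 2\beta_n+2B_n\epsilon .
\end{equation*}
Hence each piece $\mathcal{L}^n(A_i)$ has diameter at most $2\beta_n+2B_n\epsilon$, so that $\|\mathcal{L}^n\|_\alpha\leq 2\beta_n+2B_n\epsilon$, and letting $\epsilon\to0$ yields $\|\mathcal{L}^n\|_\alpha\leq 2\beta_n$. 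Combining with Nussbaum's formula, and using that the limit exists together with $2^{1/n}\to1$,
\begin{equation*}
r_{ess}(\mathcal{L})=\lim_{n\to\infty}\|\mathcal{L}^n\|_\alpha^{1/n}=\liminf_{n\to\infty}\|\mathcal{L}^n\|_\alpha^{1/n}\leq\liminf_{n\to\infty}(2\beta_n)^{1/n}=\liminf_{n\to\infty}\beta_n^{1/n},
\end{equation*}
which is the claim.

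The main difficulty is conceptual rather than computational. The term $2\beta_n$ in the diameter bound does not shrink as $\epsilon\to0$, so one might worry that the covering fails to witness near-compactness; the point is that this non-vanishing contribution is precisely the quantity governing $r_{ess}$, and only the weak-norm part $B_n\|\phi-\psi\|'$ needs to be made negligible, which is exactly what the compactness of $\mathcal{B}\hookrightarrow\mathcal{B}'$ supplies. The one external ingredient to be invoked with care is Nussbaum's formula itself, both for the existence of the limit (through submultiplicativity of $\|\cdot\|_\alpha$) and for its identification with the essential spectral radius.
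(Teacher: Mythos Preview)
Your argument is correct and is precisely the standard route to Hennion's theorem via Nussbaum's formula, which the paper itself signals in the sentence preceding the lemma. Note, however, that the paper does not supply its own proof of this lemma: it is stated as a cited result from \cite{hennion93}, with only the remark that it rests on Nussbaum's formula \cite{nussbaum69}. Your write-up thus fills in what the paper leaves to the literature, and does so along the intended lines.
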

	
	\begin{lemma} (Compact injection) \cite{gl06}
		\label{compact_bus}
		Let $0<u+s<r$ with $u\in\mathbb{N}$, $u\geq 1$, $s>0$. The canonical injection $\mathcal{B}^{u,s}\hookrightarrow\mathcal{B}^{u-1,s+1}$ is compact.
	\end{lemma}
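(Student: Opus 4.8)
The plan is to establish compactness by a Rellich--Kondrachov / Arzelà--Ascoli type argument, exploiting that the target norm $\|\cdot\|_{\mathcal{B}^{u-1,s+1}}$ tests $\phi$ against only $p \leq u-1$ derivatives but against the \emph{more regular} class $\mathcal{C}_0^{p+s+1}(W)$, while the source norm already controls the functionals $h \mapsto \int_W (\partial_{v_1}\cdots\partial_{v_p}\phi)\, h$ against the larger class $\mathcal{C}_0^{p+s}(W)$. The extra derivative available on the test functions is precisely what converts boundedness into compactness.

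First I would record three precompactness ingredients. (a) The index set $\Sigma$ is precompact: each admissible leaf is the graph of a map in $G_i(K)$ over a ball of controlled size in one of finitely many charts, and $G_i(K)$ is bounded in $\mathcal{C}^{r+1}$, hence relatively compact in the $\mathcal{C}^r$ topology; I would fix, for each $\eta > 0$, a finite $\eta$-net of leaves $W_1, \dots, W_{M(\eta)}$ and prove a "change-of-leaf" estimate showing that, for $\phi$ in the unit ball of $\mathcal{B}^{u,s}$, the integrals over $\mathcal{C}^r$-close leaves differ by $O(\eta)$. (b) On a fixed leaf $W$, the unit ball of $\mathcal{C}_0^{p+s+1}(W)$ is precompact in $\mathcal{C}_0^{p+s}(W)$ by the compact embedding of Hölder spaces on a compact domain. (c) The unit ball $\{v \in \mathcal{V}^r(W) : |v|_{\mathcal{C}^r}\le 1\}$ of vector fields is handled by integration by parts: replacing $v_p$ by a nearby $v_p'$, the difference $\int_W \partial_{v_1}\cdots\partial_{v_{p-1}}\partial_{v_p-v_p'}\phi\,h$ can be rewritten, after moving $\partial_{v_p-v_p'}$ onto the remaining data, as an integral of $\partial_{v_1}\cdots\partial_{v_{p-1}}\phi$ against a test function built from $h$ and $v_p - v_p'$ --- costing one derivative on $h$, which the passage $s \mapsto s+1$ supplies, and reducing the number of $\phi$-derivatives by one (this is why we can descend from $u$ to $u-1$).

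With these in hand I would run a diagonal extraction. Given a bounded sequence $\phi_n$ in $\mathcal{B}^{u,s}$, the numbers $\int_W (\partial_{v_1}\cdots\partial_{v_p}\phi_n)\,h$ are uniformly bounded, and boundedness in $\mathcal{B}^{u,s}$ makes the functionals $h \mapsto \int_W(\partial_{v_1}\cdots\partial_{v_p}\phi_n)h$ uniformly equicontinuous for the $\mathcal{C}^{p+s}$ topology on test functions. Choosing a countable dense family of triples $(W, (v_i), h)$ in the precompact index set described above and extracting a subsequence convergent on this family, the equicontinuity furnished by (a)--(c) upgrades pointwise convergence to uniform convergence of $\int_W(\partial_{v_1}\cdots\partial_{v_p}\phi_n)h$ over the full index set for each $p \le u-1$. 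This says exactly that $(\phi_n)$ is Cauchy in $\|\cdot\|_{\mathcal{B}^{u-1,s+1}}$, proving the injection compact.

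The main obstacle is the bookkeeping for the varying leaf: the test functions and vector fields live on $W$ itself, so "nearby triples" must be compared after pulling back to a common reference ball through the charts $\psi_i$, and one must check that the equicontinuity moduli from (a), (b), (c) are uniform over the whole precompact family of leaves simultaneously. The delicate point within this is step (c), since that is where the single extra derivative gained from $s \mapsto s+1$ is consumed: one must verify the integration by parts produces a test function whose $\mathcal{C}^{(p-1)+s+1}$ norm is controlled by $|h|_{\mathcal{C}^{p+s+1}}|v_p - v_p'|_{\mathcal{C}^r}$, with no hidden loss of regularity and no blow-up as the leaf varies.
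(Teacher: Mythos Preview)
The paper does not prove this lemma: it is quoted from \cite{gl06} and explicitly listed among the intermediate results that are admitted rather than reproved (see the paragraph preceding Proposition~\ref{bus_prop}). There is therefore no proof in the present paper to compare your argument against.

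Your overall strategy --- precompactness of the leaf family $\Sigma$ in a $\mathcal{C}^r$-type topology, the compact embedding $\mathcal{C}_0^{p+s+1}(W)\hookrightarrow\mathcal{C}_0^{p+s}(W)$ on each leaf, and a diagonal extraction --- is the correct one and matches in spirit the original argument in \cite{gl06}. One technical point to tighten in your step (c): the vector fields $v_j\in\mathcal{V}^r(W)$ live on a neighbourhood of $W$ and need not be tangent to $W$, so a literal integration by parts along $W$ of $\partial_{v_p-v_p'}$ onto $h$ does not apply to the transverse component. The usual remedy is to expand $\partial_{v_1}\cdots\partial_{v_p}\phi$ in local coordinates and absorb the uniformly $\mathcal{C}^r$-bounded components of the $v_j$ into the test function $h$; this reduces the supremum over the infinite-dimensional ball of vector fields to a finite supremum over coordinate derivatives, at the cost of one unit of regularity on $h$ --- which, as you correctly identify, is exactly what the shift $s\mapsto s+1$ supplies. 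With that adjustment your sketch is sound.
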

	
	\begin{lemma}(Lasota-Yorke inequality) \cite{gl06}
		\label{lasotayorke}
		For each $u\in\mathbb{N}$ and $s>0$ with $0<u+s<r$, there exist $A_{u,s}>0$ such that for each $n\in\mathbb{N}$ and $\phi\in\mathcal{B}^{0,s}$:
		\begin{equation}
			\|\mathcal{L}^n\phi\|_{\mathcal{B}^{0,s}} \leq A_{0,s}\|\phi\|_{\mathcal{B}^{0,s}} ;
		\end{equation}
	and, if $u\geq 1$, there exists a sequence $C_{u,s}(n)>0$ such that for all $\phi \in \mathcal{B}^{u,s}$:
		\begin{equation}
			\|\mathcal{L}^n\phi\|_{\mathcal{B}^{u,s}}\leq A_{u,s} \max(\lambda^{-u}, \nu^{s})^n\|\phi\|_{\mathcal{B}^{u,s}} + C_{u,s}(n)\|\phi\|_{\mathcal{B}^{u-1,s+1}} .
		\end{equation}
	\end{lemma}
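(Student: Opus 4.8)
The plan is to prove both inequalities by duality, estimating for every integer $0\le p\le u$, every admissible leaf $W\in\Sigma$, every $p$-tuple of $\mathcal{C}^r$ vector fields $v_1,\dots,v_p$ of norm at most $1$, and every $h\in\mathcal{C}^{p+s}_0(W)$ with $|h|_{\mathcal{C}^{p+s}}\le1$, the quantity $\int_W(\partial_{v_1}\cdots\partial_{v_p}\mathcal{L}^n\phi)\,h$. Writing $\mathcal{L}^n\phi=(\phi\circ f^{-n})\,w_n$ with $w_n=|\det Df^n\circ f^{-n}|^{-1}$, the first move is the change of variables $y=f^{-n}x$, which transports the integral from $W$ to the preimage $f^{-n}(W)$. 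The whole argument rests on the cone invariance built into the construction of $\Sigma$: iterating the inclusion $Df_t^{-1}(\mathcal{C}^s)\subset\mathcal{C}^s$ shows that $f^{-n}(W)$ is a $d_s$-dimensional submanifold whose tangent spaces stay inside the stable cones, and that it is covered by admissible leaves $W_\alpha\in\Sigma$ with uniformly bounded overlap and distortion; subordinating a partition of unity $(\omega_\alpha)$ to this covering splits the integral into a sum over the $W_\alpha$.

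For $p=0$ this already gives the first inequality, which is the heart of the matter. On each piece the integral reads $\int_{W_\alpha}\phi\,\tilde h_\alpha$ with $\tilde h_\alpha=(h\circ f^n)\,\omega_\alpha\,(J^\alpha_n/|\det Df^n|)$, where $J^\alpha_n$ is the Jacobian of $f^n$ along $W_\alpha$; up to bounded distortion the weight $J^\alpha_n/|\det Df^n|$ is the reciprocal of the unstable Jacobian of $f^n$. Although the number of pieces covering $f^{-n}(W)$ grows with $n$, the decisive point is that the weights summed over the covering stay uniformly bounded --- a bounded-distortion estimate amounting to the uniform boundedness of $\int_W|\det Df^n(f^{-n}\cdot)|^{-1}\,d\mu_W$. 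Bounding $|\int_{W_\alpha}\phi\,\tilde h_\alpha|$ by $\|\phi\|_{0,s}\,|\tilde h_\alpha|_{\mathcal{C}^s}$ and summing then yields $\|\mathcal{L}^n\phi\|_{0,s}\le A_{0,s}\|\phi\|_{0,s}$ with $A_{0,s}$ independent of $n$, reflecting that $\mathcal{L}$ has spectral radius $1$.

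For $p\ge1$ I distribute the $p$ derivatives by the Leibniz and chain rules. In the top term all derivatives fall on $\phi\circ f^{-n}$ and produce, after the change of variables, derivatives of $\phi$ along the pushed-forward fields $w_i=(Df^{-n})_*v_i$; I decompose $w_i=w_i^\perp+w_i^\parallel$ into parts transverse and tangent to $W_\alpha$, with $|w_i^\perp|\le C\lambda^{-n}$ (unstable contraction) and $|w_i^\parallel|\le C\nu^{-n}$. Keeping the transverse derivatives on $\phi$ costs one factor $\lambda^{-n}$ each, while the $s$-order Hölder regularity of the transported test function is contracted by $\nu^{ns}$; optimizing the allocation of derivatives produces the leading term $A_{u,s}\max(\lambda^{-u},\nu^{s})^n\|\phi\|_{\mathcal{B}^{u,s}}$. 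Every remaining contribution --- the Leibniz terms in which a derivative falls on the weight $w_n$, on the Jacobian, or on $\omega_\alpha$, and the terms carrying a tangent field $w_i^\parallel$, which I integrate by parts along the leaf (the boundary terms vanishing since $h\in\mathcal{C}^{p+s}_0$) --- transfers one derivative away from $\phi$ onto the test function, hence is controlled by $\|\phi\|_{\mathcal{B}^{u-1,s+1}}$; the large factors $\nu^{-n}$ and the $\mathcal{C}^r$ norms of $f^{-n}$ are absorbed into the $n$-dependent constant $C_{u,s}(n)$, which is free to grow because it multiplies the weaker norm appearing in the compact injection of Lemma \ref{compact_bus}.

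The main obstacle is the uniform control underlying the two estimates. For the first inequality it is the bounded-distortion fact that, despite the exponential growth in the number of admissible pieces needed to cover $f^{-n}(W)$, the total summed weight remains bounded in $n$; verifying this, together with bounds on all derivatives of the weight up to order $r-1$ and uniformity in $t\in I_0$, is precisely where the $\mathcal{C}^{r+1}$ regularity of the family is consumed. For the second, the delicate points are obtaining exactly the rate $\max(\lambda^{-u},\nu^{s})^n$ --- which requires cleanly separating the integer-order transverse derivatives (each $\lambda^{-n}$) from the fractional $s$-Hölder scaling ($\nu^{ns}$) and checking that the slower of the two governs the top order --- and organizing all lower-order contributions into the compactly embedded space $\mathcal{B}^{u-1,s+1}$, so that the inequality feeds Hennion's theorem (Lemma \ref{nussbaum}) to give the essential spectral radius bound of Proposition \ref{bus_prop}.
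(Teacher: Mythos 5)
The paper offers no proof of this lemma: it is stated with a citation and explicitly admitted, the text saying that all such facts ``are proved in \cite{gl06}''. So there is no in-paper argument to compare against; the relevant benchmark is the proof in Gouëzel--Liverani, and your sketch is a faithful outline of exactly that argument --- change of variables onto $f^{-n}(W)$, covering by admissible leaves via cone invariance, splitting the pushed-forward fields $Df^{-n}v_i$ into a transverse part contracted by $\lambda^{-n}$ and a tangent part integrated by parts into the weaker norm, and the $\nu^{ns}$ gain from the Hölder regularity of $h\circ f^n$ along the contracted leaf. The one place where your text asserts rather than proves is also the genuine crux: the claim that $f^{-n}(W)$ is covered by admissible leaves with \emph{uniformly bounded overlap} and that the summed weights $\sum_\alpha\bigl|\omega_\alpha J^\alpha_n/|\det Df^n|\bigr|_{\mathcal{C}^0}$ stay bounded in $n$ (morally, boundedness of the sum of inverse unstable Jacobians over the pieces). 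That is the content of the key distortion lemma in \cite{gl06} and does not follow from cone invariance alone; everything else in your outline is routine once it is granted. Since you flag it explicitly as the main obstacle rather than glossing over it, I'd call this a correct reconstruction of the cited proof at the level of detail the paper itself operates at, with that single estimate left as the acknowledged black box.
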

	
	\subsection{Linear response}

	The following theorem, stated in \cite[Theorem 2.7]{gl06} is the basis for all our further linear response results.
	\begin{theorem}
		\label{linear_response_measure}
		Let $r\geq 3$.
		Let $u\in\mathbb{N}$, $s>0$ with $u\geq 2$ and $0<u+s<r$.
		Then the map $t\mapsto \rho_t\in\mathcal{B}^{u,s}$ is differentiable in $\mathcal{B}^{u-2,s+2}$, and for $t\in I_0$ we have :
		\begin{align}
			\label{derivative_rho}
			\partial_t\rho_t &= -(1-\mathcal{L}_t)^{-1}\mathrm{div}(\rho_tX_t)\\
			\label{sum_derivative_rho}
			&= -\sum_{k=0}^\infty \mathcal{L}_t^k\mathrm{div}(\rho_tX_t) \ .
		\end{align}
		In particular, we claim that $(1-\mathcal{L}_t)$ is an invertible automorphism of the space  $\{\phi\in\mathcal{B}^{u-2,s+2},  \int\phi dx=0\}$ which is the kernel of the spectral projector associated to the eigenvalue $1$.
		
	\end{theorem}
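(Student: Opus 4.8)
The plan is to derive the formula at a formal level and then justify each step along the scale of spaces $\mathcal{B}^{u,s}\hookrightarrow\mathcal{B}^{u-1,s+1}\hookrightarrow\mathcal{B}^{u-2,s+2}$, where a genuine loss of two derivatives forces the final differentiability to be expressed only in $\mathcal{B}^{u-2,s+2}$.

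First I would record the spectral picture. The change-of-variables identity of Section \ref{transfer_operator_ss} gives $\int\mathcal{L}_t\phi\,dx=\int\phi\,dx$, so the functional $\phi\mapsto\int\phi\,dx$ is a left eigenvector of $\mathcal{L}_t$ for the eigenvalue $1$; since by Proposition \ref{bus_prop} this eigenvalue is simple with right eigenvector $\rho_t$ (normalized by $\int\rho_t\,dx=1$), the associated spectral projector is the rank-one operator $\Pi_t\phi=(\int\phi\,dx)\rho_t$. Its kernel is precisely $\{\phi:\int\phi\,dx=0\}$, and because the rest of the spectrum lies in a disc of radius $<1$, the operator $(1-\mathcal{L}_t)$ is an automorphism of this kernel, with inverse the reduced resolvent $S_t=\sum_{k\geq 0}\mathcal{L}_t^k(1-\Pi_t)$, which on the kernel equals $\sum_{k\geq 0}\mathcal{L}_t^k$. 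This establishes the ``in particular'' claim. I would also need that this spectral data is consistent across the scale and uniform in $t$ near a fixed parameter; this is where Lemmas \ref{lasotayorke}, \ref{compact_bus}, \ref{nussbaum} and a Keller--Liverani perturbation argument enter, yielding $\sup_h\|\rho_{t+h}\|_{\mathcal{B}^{u,s}}<\infty$ and boundedness of $S_t$ on each $\mathcal{B}^{u-j,s+j}$.

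The core identity comes from subtracting the fixed-point equations $(1-\mathcal{L}_{t+h})\rho_{t+h}=0$ and $(1-\mathcal{L}_t)\rho_t=0$, giving $(1-\mathcal{L}_t)(\rho_{t+h}-\rho_t)=(\mathcal{L}_{t+h}-\mathcal{L}_t)\rho_{t+h}$. Since $\mathcal{L}_t$ preserves integrals, both sides have zero integral, hence lie in $\ker\Pi_t$, so I may invert to get $\rho_{t+h}-\rho_t=S_t(\mathcal{L}_{t+h}-\mathcal{L}_t)\rho_{t+h}$. It then remains to pass to the limit in $h^{-1}S_t(\mathcal{L}_{t+h}-\mathcal{L}_t)\rho_{t+h}$. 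For this I would prove two regularity estimates for the family: $\|\mathcal{L}_{t+h}-\mathcal{L}_t\|_{\mathcal{B}^{u',s'}\to\mathcal{B}^{u'-1,s'+1}}=O(h)$ and $\|\mathcal{L}_{t+h}-\mathcal{L}_t-h\dot{\mathcal{L}}_t\|_{\mathcal{B}^{u,s}\to\mathcal{B}^{u-2,s+2}}=o(h)$, where $\dot{\mathcal{L}}_t\phi=-\mathrm{div}(X_t\mathcal{L}_t\phi)$ arises from differentiating the pushforward $\langle\mathcal{L}_t\phi,\psi\rangle=\langle\phi,\psi\circ f_t\rangle$ and using $\partial_t f_t=X_t\circ f_t$ followed by integration by parts. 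The first estimate (unshifted) gives the Lipschitz bound $\|\rho_{t+h}-\rho_t\|_{\mathcal{B}^{u-1,s+1}}=O(h)$; applied on shifted indices it controls the cross term $h^{-1}(\mathcal{L}_{t+h}-\mathcal{L}_t)(\rho_{t+h}-\rho_t)$, while the second estimate handles $h^{-1}(\mathcal{L}_{t+h}-\mathcal{L}_t)\rho_t-\dot{\mathcal{L}}_t\rho_t$. Boundedness of $S_t$ on $\mathcal{B}^{u-2,s+2}$ then yields convergence to $S_t\dot{\mathcal{L}}_t\rho_t$ in $\mathcal{B}^{u-2,s+2}$. Finally $\dot{\mathcal{L}}_t\rho_t=-\mathrm{div}(X_t\mathcal{L}_t\rho_t)=-\mathrm{div}(\rho_t X_t)$ since $\mathcal{L}_t\rho_t=\rho_t$, so $\partial_t\rho_t=-S_t\,\mathrm{div}(\rho_t X_t)=-(1-\mathcal{L}_t)^{-1}\mathrm{div}(\rho_t X_t)=-\sum_{k\geq 0}\mathcal{L}_t^k\mathrm{div}(\rho_t X_t)$, the last equalities using that $\mathrm{div}(\rho_t X_t)$ has zero integral.

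I expect the main obstacle to be the two operator-family estimates, and in particular the second one with its honest two-derivative loss: one must differentiate $\mathcal{L}_t$ in $t$ inside the anisotropic norm, whose definition pairs $\phi$ against $\mathcal{C}^{p+s}$ test functions integrated over admissible leaves $W\in\Sigma$, so the operator $\mathrm{div}(X_t\,\cdot\,)$ must be transferred onto the test function by integration by parts along and transverse to the leaves, and each pullback by $f_t^{-1}$ must be shown to keep leaves admissible (as recalled in Section \ref{admissible_stable_leaves_ss}). Securing these bounds uniformly in $t$, together with the Keller--Liverani uniformity of the spectral gap that makes $\Pi_t$ and $S_t$ uniformly bounded on the scale, is the technical heart of the matter; once they are in place the limit passage above is routine.
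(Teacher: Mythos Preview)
Your argument is correct and follows a genuinely different route from the paper's. The paper (after restricting to $u\geq 3$, $r\geq 4$) uses an analytic-perturbation-theory computation: it represents $\rho_t$ as a contour integral $\rho_t=\frac{1}{2i\pi}\int_\gamma(z-\mathcal{L}_t)^{-1}\rho_0\,dz$ around the isolated eigenvalue $1$, differentiates under the integral using the operator derivative $\mathcal{M}_t\phi=-\mathrm{div}((\mathcal{L}_t\phi)X_t)$, and then manipulates resolvents to arrive at $(1-\mathcal{L}_0)^{-1}(1-\Pi_0)\mathcal{M}_0\rho_0$. Your approach instead subtracts the fixed-point equations to obtain the exact identity $\rho_{t+h}-\rho_t=S_t(\mathcal{L}_{t+h}-\mathcal{L}_t)\rho_{t+h}$ and bootstraps: a one-derivative-loss Lipschitz estimate first yields $\|\rho_{t+h}-\rho_t\|_{\mathcal{B}^{u-1,s+1}}=O(h)$, and then the two-derivative-loss Taylor estimate gives the derivative in $\mathcal{B}^{u-2,s+2}$. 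This is essentially the strategy of \cite{gl06} itself. The contour-integral approach is slick and generalizes easily to higher-order smoothness of $t\mapsto\rho_t$, but it needs a spectral gap on \emph{both} $\mathcal{B}^{u,s}$ and $\mathcal{B}^{u-2,s+2}$, hence the paper's extra hypothesis $u\geq 3$; your difference-quotient route, with the Keller--Liverani machinery you invoke to control $S_t$ uniformly on the scale even when the weak space $\mathcal{B}^{u-2,s+2}$ has no spectral gap of its own, is what is actually needed for the borderline case $u=2$, $r=3$ stated in the theorem.
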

		The reader should note that we only assumed $r\geq 3$, thus the previous theorem applies to $\mathcal{C}^4$ families of $\mathcal{C}^3$ diffeomorphisms.
	
	\begin{proof}
	We will only prove this result under the stronger assumptions that $r\geq 4$ and $u\geq 3$. This allows us to give a simpler proof, using that $\mathcal{L}_0$ has a spectral gap both on $\mathcal{B}^{u,s}$ and $\mathcal{B}^{u-2, s+2}$. We refer to \cite[Theorem 2.7]{gl06} for a full proof of the general case. 
	
	We first state and prove the following lemma, which is used to control the norm of derivatives of distributions in $\mathcal{B}^{u,s}$.
		\begin{lemma}
			\label{derivative_control}
			Let $u\in\mathbb{N}$, $s>0$ with $u\geq 1$ and $0<u+s<r$. Let $Y$ be a $\mathcal{C}^{r}$ vector field over $M$. Then
			$\partial_Y: \left\{
			\begin{array}{rcl}
			\mathcal{B}^{u,s} &\rightarrow& \mathcal{B}^{u-1,s+1}\\
			\phi &\mapsto& \partial_Y\phi
			\end{array}\right.$
		is a bounded operator.			
		\end{lemma}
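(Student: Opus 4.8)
The plan is to prove the stated operator bound first on the dense subspace $\mathcal{C}^r(M)$ and then extend by density, using that $\mathcal{B}^{u-1,s+1}$ is a Banach space and that $\partial_Y$ sends $\mathcal{C}^r(M)$ into $\mathcal{C}^{r-1}(M)\subset\mathcal{B}^{u-1,s+1}$ (the inclusion holding because at most $u-1<r-1$ derivatives enter the $\mathcal{B}^{u-1,s+1}$ norm, so a $\mathcal{C}^{r-1}$ function has finite norm and is approximable by smooth functions in that norm). The heart of the matter is a bookkeeping observation on the two defining families of semi-norms: differentiating once along $Y$ turns an order-$p$ derivative appearing in the definition of $\|\cdot\|_{\mathcal{B}^{u-1,s+1}}$ into an order-$(p+1)$ derivative, while the passage $s\mapsto s+1$ raises the allowed test-function regularity from $p+s$ to $(p+1)+s$ — which is exactly what the extra derivative demands.

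Concretely, I would fix $\phi\in\mathcal{C}^r(M)$ and recall $\|\partial_Y\phi\|_{\mathcal{B}^{u-1,s+1}}=\sup_{0\le p\le u-1}\|\partial_Y\phi\|_{p,(s+1)+p}$. Fixing such a $p$, a leaf $W\in\Sigma$, fields $v_1,\dots,v_p\in\mathcal{V}^r(W)$ with $|v_i|_{\mathcal{C}^r}\le 1$, and a test function $h\in\mathcal{C}_0^{(p+1)+s}(W)$ with $|h|_{\mathcal{C}^{(p+1)+s}}\le 1$ (note $(s+1)+p=s+(p+1)$), the integrand $\partial_{v_1}\cdots\partial_{v_p}\partial_Y\phi$ is an order-$(p+1)$ directional derivative of $\phi$ along the $p+1$ fields $v_1,\dots,v_p,Y$. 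Since $Y$ is a global $\mathcal{C}^r$ field, its restriction to a neighbourhood of $W$ lies in $\mathcal{V}^r(W)$, and with $C:=\max(1,|Y|_{\mathcal{C}^r})$ the rescaled field $Y/C$ satisfies $|Y/C|_{\mathcal{C}^r}\le 1$. Hence
\[
\Bigl|\int_W(\partial_{v_1}\cdots\partial_{v_p}\partial_Y\phi)\,h\Bigr|
= C\,\Bigl|\int_W(\partial_{v_1}\cdots\partial_{v_p}\partial_{Y/C}\phi)\,h\Bigr|,
\]
and the quantity on the right is precisely one of the expressions whose supremum defines $\|\phi\|_{p+1,\,s+(p+1)}$: it uses $p+1$ admissible fields and a test function in $\mathcal{C}_0^{(p+1)+s}(W)$ of norm $\le 1$. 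It is therefore bounded by $\|\phi\|_{p+1,\,s+(p+1)}$, and since $1\le p+1\le u$ we have $\|\phi\|_{p+1,\,s+(p+1)}\le\|\phi\|_{\mathcal{B}^{u,s}}$. Taking the supremum over $W$, the $v_i$ and $h$ gives $\|\partial_Y\phi\|_{p,(s+1)+p}\le C\|\phi\|_{\mathcal{B}^{u,s}}$, and the further supremum over $0\le p\le u-1$ yields $\|\partial_Y\phi\|_{\mathcal{B}^{u-1,s+1}}\le C\|\phi\|_{\mathcal{B}^{u,s}}$ with $C=\max(1,|Y|_{\mathcal{C}^r})$; extension by density then finishes the argument.

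I do not expect a genuine analytic obstacle here: the two points that require care are purely organizational. First, one must verify that the shift $s\mapsto s+1$ in the target space exactly compensates the loss of one derivative, so that the test functions appearing for $\|\partial_Y\phi\|_{\mathcal{B}^{u-1,s+1}}$ are \emph{exactly} those admitted in the $(p+1)$-st semi-norm of $\|\cdot\|_{\mathcal{B}^{u,s}}$ — this is what makes the estimate lossless. Second, one should record that a globally defined $\mathcal{C}^r$ field restricts to an admissible local field on a neighbourhood of each leaf with a uniformly controlled $\mathcal{C}^r$ norm, which is guaranteed by the compactness of $M$ together with the finite atlas of admissible charts (so that the constant $C$ is independent of $W$).
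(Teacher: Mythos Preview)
Your proof is correct and follows essentially the same approach as the paper: establish the inequality $\|\partial_Y\phi\|_{p,(s+1)+p}\le C\|\phi\|_{p+1,s+(p+1)}\le C\|\phi\|_{\mathcal{B}^{u,s}}$ on $\mathcal{C}^r(M)$ by absorbing $\partial_Y$ as an extra admissible field in the definition of the semi-norm, then extend by density. Your version is in fact slightly more careful than the paper's, which writes the constant as $|Y|_{\mathcal{C}^r}$ rather than $\max(1,|Y|_{\mathcal{C}^r})$, and you make explicit the matching of test-function regularities and the admissibility of the restriction of $Y$; but these are elaborations of the same argument, not a different route.
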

		\begin{proof}
			Let $u,s, Y$ be as stated. Let $\phi\in\mathcal{C}^r(M)$
			By definition of the norms $\|.\|_{p,s+p}$ we have, for $0\leq p \leq u-1$
			\begin{align}
				\|\partial_Y \phi\|_{p,s+1+p} &\leq |Y|_{\mathcal{C}^r}\|\phi\|_{p+1,s+1+p} \\
				&\leq |Y|_{\mathcal{C}^r}\|\phi\|_{\mathcal{B}^{u,s}} ~.
			\end{align}
			Thus
			\begin{equation}
				\|\partial_Y\phi\|_{\mathcal{B}^{u-1,s+1}} \leq |Y|_{\mathcal{C}^r}\|\phi\|_{\mathcal{B}^{u,s}}  ~.
			\end{equation}
			By density, $\partial_Y$ extends to a bounded operator $\partial_Y : \mathcal{B}^{u,s}\rightarrow\mathcal{B}^{u-1,s+1}$.
			
		\end{proof}
		
		The following lemma allows us to control the norm of a product of an anisotropic distribution with a smooth function.
		
		\begin{lemma}
			\label{product_control}
			Let $k\in\mathbb{N}$ with $0<k\leq r$ and $h\in\mathcal{C}^k(M)$. Let $u\in\mathbb{N}$, $s>0$ with $u\geq 1$ and $0<u+s<k$. Then $m_h: \left\{\begin{array}{rcl}
			\mathcal{B}^{u,s} &\rightarrow& \mathcal{B}^{u,s}\\
			\phi &\mapsto& h\phi\end{array}\right.$ is a bounded operator.
		\end{lemma}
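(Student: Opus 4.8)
The plan is to prove the estimate $\|h\phi\|_{\mathcal{B}^{u,s}}\le C\,|h|_{\mathcal{C}^k}\,\|\phi\|_{\mathcal{B}^{u,s}}$ for $\phi$ in the dense subspace $\mathcal{C}^r(M)$ and then extend $m_h$ by density, exactly as in the proof of Lemma \ref{derivative_control}. So I fix $\phi\in\mathcal{C}^r(M)$, an integer $0\le p\le u$, a leaf $W\in\Sigma$, vector fields $v_1,\dots,v_p\in\mathcal{V}^r(W)$ with $|v_i|_{\mathcal{C}^r}\le 1$, and a test function $g\in\mathcal{C}_0^{p+s}(W)$ with $|g|_{\mathcal{C}^{p+s}}\le 1$ (I rename the test function, denoted $h$ in the definition of the norm, to $g$ in order to avoid the clash with the multiplier $h$). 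The task is to bound $\big|\int_W \partial_{v_1}\cdots\partial_{v_p}(h\phi)\,g\big|$ by a constant multiple of $|h|_{\mathcal{C}^k}\,\|\phi\|_{\mathcal{B}^{u,s}}$.

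First I would expand the derivatives by the Leibniz rule. Since the $\partial_{v_i}$ are applied in a fixed order, one gets a sum over subsets $S\subseteq\{1,\dots,p\}$ recording which derivations fall on $h$,
\[
\partial_{v_1}\cdots\partial_{v_p}(h\phi)=\sum_{S\subseteq\{1,\dots,p\}}(\partial_{v_S}h)\,(\partial_{v_{S^c}}\phi),
\]
where $\partial_{v_S}$ denotes the ordered composition of the $\partial_{v_i}$ for $i\in S$. Each resulting term is $\int_W(\partial_{v_{S^c}}\phi)\,\tilde g_S$ with $\tilde g_S:=(\partial_{v_S}h)\,g$, and the number of derivatives falling on $\phi$ is $p':=|S^c|=p-|S|\le p\le u$.

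The crux is to recognise $\tilde g_S$ as an admissible test function for the seminorm $\|\cdot\|_{p',s+p'}$. First, $\tilde g_S$ vanishes on a neighbourhood of $\partial W$ because $g$ does, so $\tilde g_S\in\mathcal{C}_0$. Next comes the regularity bookkeeping, where the hypothesis $u+s<k$ is used: applying the $|S|$ directional derivatives along $\mathcal{C}^r$ fields to $h\in\mathcal{C}^k$ leaves $\partial_{v_S}h\in\mathcal{C}^{k-|S|}$ (here $r\ge k$ guarantees the fields are smooth enough to lose no extra regularity), and since $k>p+s$ we have $k-|S|>p-|S|+s=p'+s$; combined with $g\in\mathcal{C}^{p+s}\subset\mathcal{C}^{p'+s}$, the product $\tilde g_S$ is at least $\mathcal{C}^{p'+s}$. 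Using that the (possibly fractional) Hölder space $\mathcal{C}^{p'+s}$ is a Banach algebra on the bounded chart domains, together with $|v_i|_{\mathcal{C}^r}\le 1$ and $|g|_{\mathcal{C}^{p+s}}\le 1$, I obtain $|\tilde g_S|_{\mathcal{C}^{p'+s}}\le C\,|h|_{\mathcal{C}^k}$ with $C$ independent of $W,p,g,v_i$. Hence $\tilde g_S/|\tilde g_S|_{\mathcal{C}^{p'+s}}$ is an admissible test function and, since the $v_i\in S^c$ satisfy $|v_i|_{\mathcal{C}^r}\le 1$,
\[
\Big|\int_W(\partial_{v_{S^c}}\phi)\,\tilde g_S\Big|\le|\tilde g_S|_{\mathcal{C}^{p'+s}}\,\|\phi\|_{p',s+p'}\le C\,|h|_{\mathcal{C}^k}\,\|\phi\|_{\mathcal{B}^{u,s}} .
\]

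Finally I would sum the at most $2^u$ terms and take the suprema over $p,W,v_i,g$ that define the $\mathcal{B}^{u,s}$ norm, obtaining $\|h\phi\|_{\mathcal{B}^{u,s}}\le 2^u C\,|h|_{\mathcal{C}^k}\,\|\phi\|_{\mathcal{B}^{u,s}}$; density of $\mathcal{C}^r(M)$ then yields the claimed bounded extension. I expect the main obstacle to be the regularity accounting of the third paragraph: one must check that distributing the $|S|$ derivatives of $h$ along merely $\mathcal{C}^r$ vector fields still produces a multiplier of Hölder regularity $\ge p'+s$ with a clean norm bound, and that the Hölder-algebra inequality $|fg|_{\mathcal{C}^\alpha}\le C|f|_{\mathcal{C}^\alpha}|g|_{\mathcal{C}^\alpha}$ holds uniformly across the finitely many charts for the non-integer exponent $\alpha=p'+s$. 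This is precisely where the strict inequality $u+s<k$ is essential, as it leaves a margin of derivatives of $h$ to spare.
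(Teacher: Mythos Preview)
Your proof is correct and takes a genuinely different route from the paper's. You estimate the $\mathcal{B}^{u,s}$ norm directly: expand $\partial_{v_1}\cdots\partial_{v_p}(h\phi)$ by the Leibniz rule, and for each term show that the factor $\tilde g_S=(\partial_{v_S}h)\,g$ is an admissible $\mathcal{C}_0^{p'+s}$ test function (with $p'=p-|S|$), so that the integral is bounded by $\|\phi\|_{p',s+p'}\le\|\phi\|_{\mathcal{B}^{u,s}}$. The strict inequality $u+s<k$ enters exactly where you indicate, guaranteeing $k-|S|>p'+s$ so that $\partial_{v_S}h$ keeps enough Hölder regularity to multiply $g$. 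This is essentially the argument given in Gouëzel--Liverani \cite{gl06} for the analogous lemma there.

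The paper's own proof is much terser and conceptually different: it merely records the Banach-algebra inequality $|h\phi|_{\mathcal{C}^k}\le C|h|_{\mathcal{C}^k}|\phi|_{\mathcal{C}^k}$ on $\mathcal{C}^k(M)$ and then appeals to the continuous inclusion $\mathcal{C}^k\hookrightarrow\mathcal{B}^{u,s}$ and to density of $\mathcal{C}^k$ in $\mathcal{B}^{u,s}$. Taken literally, that chain of inequalities only yields $\|h\phi\|_{\mathcal{B}^{u,s}}\le C'|h|_{\mathcal{C}^k}|\phi|_{\mathcal{C}^k}$, which controls $h\phi$ by the \emph{stronger} $\mathcal{C}^k$ norm of $\phi$ rather than by $\|\phi\|_{\mathcal{B}^{u,s}}$, and therefore does not by itself give a bounded extension of $m_h$ to all of $\mathcal{B}^{u,s}$. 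Your direct Leibniz argument supplies precisely the missing estimate and makes transparent the role of the hypothesis $u+s<k$; the paper's shortcut, while morally pointing at the right fact, leaves that step implicit.
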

		
		\begin{proof}
			For $\phi\in\mathcal{C}^k$, up to considering an equivalent norm on $\mathcal{C}^k(M)$, there exists a constant $C>0$ such that:
			$$|h\phi|_{\mathcal{C}^k}\leq C|h|_{\mathcal{C}^k}|\phi|_{\mathcal{C}^k}$$
			
		The result follows from the inclusion $\mathcal{C}^k(M)\hookrightarrow \mathcal{B}^{u,s}$ and from the fact that $\mathcal{B}^{u,s}$ may be obtained by completion of $\mathcal{C}^{k}(M)$.	
		\end{proof}
		
		We now prove differentiability of the transfer operator
		\begin{lemma}
			\label{diff_transfer}
			Let $u,s$ be as stated in Theorem \ref{linear_response_measure}. Then $t\mapsto\mathcal{L}_t$ is differentiable as a family of bounded operators from $\mathcal{B}^{u,s}$ to $\mathcal{B}^{u-2,s+2}$  --- that is, after composition with the canonical inclusion. Moreover, for $\phi\in\mathcal{B}^{u,s}$,
			\begin{equation}
				\mathcal{M}_t \phi :=\partial_t\mathcal{L}_t\phi
				=-\mathrm{div}((\mathcal{L}_t\phi) X_t)
			\end{equation}
			and $\mathcal{M}_t : \mathcal{B}^{u,s}\rightarrow\mathcal{B}^{u-2,s+2}$ is a bounded operator.
		\end{lemma}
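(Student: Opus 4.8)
The plan is to establish three things in turn: the explicit formula for $\mathcal{M}_t$, the boundedness of $\mathcal{M}_t$ as an operator $\mathcal{B}^{u,s}\to\mathcal{B}^{u-2,s+2}$, and finally genuine differentiability of $t\mapsto\mathcal{L}_t$ in the operator norm of $\mathcal{L}(\mathcal{B}^{u,s},\mathcal{B}^{u-2,s+2})$. The guiding principle throughout is to verify every identity and estimate for $\phi\in\mathcal{C}^r(M)$, where all objects are classical smooth functions, and then to extend to all of $\mathcal{B}^{u,s}$ by density of $\mathcal{C}^r(M)$ together with the uniform operator bounds obtained along the way.

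To obtain the formula and boundedness, I would first note that by definition of $X_t$ the curve $t\mapsto f_t(x)$ is an integral curve of the time-dependent field $X_t$, so that $\mathcal{L}_t\phi\,dx=(f_t)_*(\phi\,dx)$ satisfies the continuity equation $\partial_t(\mathcal{L}_t\phi)+\mathrm{div}((\mathcal{L}_t\phi)X_t)=0$. Concretely, for $\phi\in\mathcal{C}^r(M)$ and a test function $\psi$, differentiating the duality identity $\langle\psi,\mathcal{L}_t\phi\rangle=\langle\psi\circ f_t,\phi\rangle$ in $t$ gives $\langle\partial_{X_t}\psi,\mathcal{L}_t\phi\rangle$, and integrating by parts on the closed manifold $M$ (where $\int_M\mathrm{div}(\cdot)\,dx=0$) moves the derivative onto $\mathcal{L}_t\phi$ and produces exactly $\langle\psi,-\mathrm{div}((\mathcal{L}_t\phi)X_t)\rangle$, yielding $\mathcal{M}_t\phi=-\mathrm{div}((\mathcal{L}_t\phi)X_t)$. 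For boundedness I would use $\mathrm{div}((\mathcal{L}_t\phi)X_t)=\partial_{X_t}(\mathcal{L}_t\phi)+(\mathcal{L}_t\phi)\,\mathrm{div}(X_t)$. Since $t\mapsto f_t$ is $\mathcal{C}^r$ with values in $\mathcal{C}^{r+1}$, the field $X_t=(\partial_tf_t)\circ f_t^{-1}$ is $\mathcal{C}^{r+1}$ and $\mathrm{div}(X_t)\in\mathcal{C}^r$. By Proposition \ref{bus_prop}, $\mathcal{L}_t\phi\in\mathcal{B}^{u,s}$; by Lemma \ref{derivative_control}, $\partial_{X_t}(\mathcal{L}_t\phi)\in\mathcal{B}^{u-1,s+1}$; and by Lemma \ref{product_control} (applicable since $u+s<r$), $(\mathcal{L}_t\phi)\,\mathrm{div}(X_t)\in\mathcal{B}^{u,s}$. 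Both summands lie in $\mathcal{B}^{u-1,s+1}$, and the compact — hence continuous — injection $\mathcal{B}^{u-1,s+1}\hookrightarrow\mathcal{B}^{u-2,s+2}$ of Lemma \ref{compact_bus} (valid since $u\geq2$) places $\mathcal{M}_t\phi$ in $\mathcal{B}^{u-2,s+2}$ with $\|\mathcal{M}_t\phi\|_{\mathcal{B}^{u-2,s+2}}\leq C\|\phi\|_{\mathcal{B}^{u,s}}$ uniformly in $t\in I_0$; note that in fact $\mathcal{M}_t$ is already bounded into $\mathcal{B}^{u-1,s+1}$, a point I will reuse below.

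For differentiability, the natural route is the fundamental theorem of calculus on the dense subspace $\mathcal{C}^r(M)$. For smooth $\phi$ the map $\sigma\mapsto\mathcal{L}_\sigma\phi$ is classically $\mathcal{C}^1$ with derivative $\mathcal{M}_\sigma\phi$, so
\begin{equation*}
\mathcal{L}_{t+\epsilon}\phi-\mathcal{L}_t\phi=\int_t^{t+\epsilon}\mathcal{M}_\sigma\phi\,d\sigma,\qquad \mathcal{L}_{t+\epsilon}\phi-\mathcal{L}_t\phi-\epsilon\,\mathcal{M}_t\phi=\int_t^{t+\epsilon}(\mathcal{M}_\sigma-\mathcal{M}_t)\phi\,d\sigma.
\end{equation*}
The first identity, combined with the uniform bound $\|\mathcal{M}_\sigma\|_{\mathcal{L}(\mathcal{B}^{u,s},\mathcal{B}^{u-1,s+1})}\leq C$ from the previous step, shows that $\sigma\mapsto\mathcal{L}_\sigma$ is Lipschitz into $\mathcal{B}^{u-1,s+1}$, namely $\|(\mathcal{L}_\sigma-\mathcal{L}_t)\phi\|_{\mathcal{B}^{u-1,s+1}}\leq C|\sigma-t|\,\|\phi\|_{\mathcal{B}^{u,s}}$. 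Writing $(\mathcal{M}_\sigma-\mathcal{M}_t)\phi=-\mathrm{div}((\mathcal{L}_\sigma\phi-\mathcal{L}_t\phi)X_\sigma)-\mathrm{div}((\mathcal{L}_t\phi)(X_\sigma-X_t))$ and bounding the first term via this Lipschitz estimate (losing the second derivative through the $\mathrm{div}$) and the second via the Lipschitz dependence $\|X_\sigma-X_t\|_{\mathcal{C}^{r+1}}\leq C|\sigma-t|$, one gets $\|(\mathcal{M}_\sigma-\mathcal{M}_t)\phi\|_{\mathcal{B}^{u-2,s+2}}\leq C|\sigma-t|\,\|\phi\|_{\mathcal{B}^{u,s}}$. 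Inserting this into the second identity yields a remainder of size $\tfrac12 C\epsilon^2\|\phi\|_{\mathcal{B}^{u,s}}$, first for $\phi\in\mathcal{C}^r(M)$ and then, by density and the uniform bounds, for all $\phi\in\mathcal{B}^{u,s}$. Dividing by $\epsilon$ proves differentiability with derivative $\mathcal{M}_t$.

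The main obstacle, and the reason the target must be the weaker space $\mathcal{B}^{u-2,s+2}$ rather than $\mathcal{B}^{u-1,s+1}$, is precisely this loss-of-two-derivatives phenomenon: one derivative is spent by the $\mathrm{div}$ in the formula for $\mathcal{M}_t$, and a second is spent in passing from the difference quotient to the derivative, since the error term involves a $\mathrm{div}$ acting on the difference $\mathcal{L}_\sigma\phi-\mathcal{L}_t\phi$ that is controlled only one derivative below $\phi$. Making this rigorous requires careful bookkeeping of regularity indices — that $X_t\in\mathcal{C}^{r+1}$, $\mathrm{div}(X_t)\in\mathcal{C}^r$, and the inequalities $u\geq2$, $u+s<r$ — so that Lemmas \ref{derivative_control}, \ref{product_control} and \ref{compact_bus} apply at every occurrence, together with care in justifying the classical $\mathcal{C}^1$ identities on $\mathcal{C}^r(M)$ before invoking density.
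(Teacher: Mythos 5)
Your proof is correct in substance but reaches the conclusion by a genuinely different route from the paper's. For the formula, the paper computes $\mathcal{L}_t\phi(x)-\mathcal{L}_0\phi(x)$ directly and pointwise, Taylor-expanding in $t$ with a remainder $O\bigl(t^2(|\phi|+|\nabla\phi|+|\nabla^2\phi|)\bigr)$ which it then converts into a $\mathcal{B}^{u-2,s+2}$ bound via Lemma \ref{derivative_control}; you instead derive $\mathcal{M}_t\phi=-\mathrm{div}((\mathcal{L}_t\phi)X_t)$ weakly from the duality identity $\langle\psi,\mathcal{L}_t\phi\rangle=\langle\psi\circ f_t,\phi\rangle$, and you control the remainder by writing it as $\int_t^{t+\epsilon}(\mathcal{M}_\sigma-\mathcal{M}_t)\phi\,d\sigma$ and proving a Lipschitz estimate on $\sigma\mapsto\mathcal{M}_\sigma$ in operator norm. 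Your version has a real advantage: it stays at the level of operator estimates throughout, whereas the paper's pointwise majorant $O(t^2(|\phi|+|\nabla\phi|+|\nabla^2\phi|))$ cannot be fed directly into the anisotropic norms (one cannot bound $\|\cdot\|_{\mathcal{B}^{u-2,s+2}}$ of a function by that of a pointwise dominant), so the paper's last step implicitly requires exactly the kind of term-by-term bookkeeping you make explicit. Your decomposition also isolates cleanly why two derivatives are lost, which the paper leaves implicit. One caveat: you assert $X_t\in\mathcal{C}^{r+1}$ and $\|X_\sigma-X_t\|_{\mathcal{C}^{r+1}}\le C|\sigma-t|$, but a $\mathcal{C}^r$ family of $\mathcal{C}^{r+1}$ diffeomorphisms only yields $X_t\in\mathcal{C}^{r-1}$ (as the paper itself states in this proof) and a Lipschitz bound for $X_\sigma-X_t$ one derivative lower still; the argument survives with these corrected exponents, but the applications of Lemma \ref{product_control} to $\mathrm{div}(X_t)$ and $\mathrm{div}(X_\sigma-X_t)$ then require $u+s$ to sit a couple of units below $r$ rather than merely $u+s<r$ --- a tightening of the index constraints that, to be fair, the paper's own invocation of Lemma \ref{product_control} with $\mathrm{div}(X_0)\in\mathcal{C}^{r-2}$ also needs and does not spell out.
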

		\begin{proof}
			Let $\phi\in\mathcal{C}^r(M)$. Without loss of generality, we prove the result for $t=0$.
			By Lemmas \ref{derivative_control} and \ref{product_control}, since $X_0$ is $\mathcal{C}^{r-1}$, we have $\mathrm{div}((\mathcal{L}_0\phi)X_0)\in\mathcal{B}^{u-2,s+2}$.
			By a straightforward computation, for $t\in I_0$ and $x\in M$ :
			\begin{align*}
				\mathcal{L}_t\phi(x) - \mathcal{L}_0\phi(x) &= t\left(-\langle X(x), \nabla(\mathcal{L}_0\phi)(x)\rangle - \mathrm{div}(X)(x)\mathcal{L}_t\phi\right)\\
				&  + O\left(t^2 (|\phi(x)| + |\nabla\phi(x)| + |\nabla^2\phi(x)|)\right)
			\end{align*}
			Thus, by Lemma \ref{derivative_control} applied twice to control $|\nabla\phi|$ and $|\nabla^2\phi|$:
			\begin{equation}
				\|\mathcal{L}_t\phi - \mathcal{L}_0\phi\  - t(-\mathrm{div}((\mathcal{L}_0\phi) X_0))\|_{\mathcal{B}^{u-2,s+2}} = O\left(t^2 \|\phi\|_{\mathcal{B}^{u-2,s+2}}\right)  ~.
			\end{equation}
			Thus $\mathcal{L}_t\phi$ is differentiable in $\mathcal{B}^{u-2,s+2}$ and $\mathcal{M}_t$ extends by density to an operator on $\mathcal{B}^{u,s}$ that satisfies $\mathcal{M}_t = \partial_t\mathcal{L}_t$.
		\end{proof}
		
		From now on, the proof is standard and we follow the lines of \cite{baladiicm} to show (\ref{derivative_rho}) and (\ref{sum_derivative_rho}).
		Without loss of generality, we consider only differentiability of $\rho_t$ at $t=0$.
		Let $u,s$ be as stated in Theorem \ref{linear_response_measure}. Assume that $u\geq 3$.
		Since $\mathcal{L}_0$ has essential spectral radius strictly lower than $1$, we can find a simple closed curve in the complex plane $\gamma$ isolating $1$ from the rest of the spectrum in both $\mathcal{B}^{u,s}$ and $\mathcal{B}^{u-2,s+2}$.
		
		By semi-continuity of separated parts of the spectrum \cite[section IV.4]{kato} and continuity of the family $\mathcal{L}_t$ in both $\mathcal{B}^{u,s}$ and $\mathcal{B}^{u-2,s+2}$, up to restricting the interval for parameter $t$, the curve $\gamma$ separates the spectrum into two regions for every $\mathcal{L}_t$, and the spectrum inside $\gamma$ corresponds to an eigenspace of dimension exactly $1$. Thus $1$ is the only eigenvalue inside $\gamma$.
		
		For any $t\in I_0$: 
		\begin{equation}
			\rho_t = \dfrac{1}{2i\pi}\int_\gamma (z-\mathcal{L}_t)^{-1}\rho_0 dz \ .
		\end{equation}
		Since, by Lemma \ref{diff_transfer}, the map $t\mapsto\mathcal{L}_t$ is differentiable, we have that $\rho_t$ is differentiable in $\mathcal{B}^{u-2,s+2}$ and that
		\begin{align*}
		\partial_t\rho_t &= \dfrac{1}{2i\pi}\int_\gamma (z-\mathcal{L}_0)^{-1}\mathcal{M}_0(z-\mathcal{L}_0)^{-1}\rho_0 dz \\
		&= \dfrac{1}{2i\pi}\int_\gamma (z-\mathcal{L}_0)^{-1}\mathcal{M}_0\dfrac{\rho_0}{z-1}dz\\
		&= \dfrac{1}{2i\pi}(1-\mathcal{L}_0)^{-1}(\int_\gamma\dfrac{\mathcal{M}_0\rho_0}{z-1}dz - \int_\gamma (z-\mathcal{L}_0)^{-1}\mathcal{M}_0\rho_0 dz)\\
		&= (1-\mathcal{L}_0)^{-1}(1-\Pi_0)\mathcal{M}_0\rho_0\\
		&= -(1-\mathcal{L}_0)^{-1}(1-\Pi_0)\mathrm{div}(\rho_0X_0)
		\end{align*}
		again by Lemma \ref{diff_transfer} and with $\Pi_0$ the spectral projector of $\mathcal{L}_0$ on the eigenvalue $1$, which actually is integration with respect to Lebesgue measure --- i.e. $\Pi_0\phi=\int\phi dx$ for $\phi\in\mathcal{B}^{u-2,s+2}$.
		
		Since $M$ is boundaryless, integration by parts yields :
		\begin{equation}
		\partial_t\rho_t = -(1-\mathcal{L}_0)^{-1}(\mathrm{div}(\rho_0X_0)) \ .
		\end{equation}
		Since $\Pi_0(\mathrm{div}(\rho_0X_0)) =0$ and, by Proposition \ref{bus_prop}, the operator $\mathcal{L}_0$ has no other peripheral eigenvalue in $\mathcal{B}^{u-2,s+2}$, this derivative is the exponentially convergent sum
		\begin{equation}
		\partial_t\rho_t = -\sum_{k=0}^\infty \mathcal{L}_0^k(\mathrm{div}(\rho_0X_0)) \ .
		\end{equation}
	\end{proof}
	
	We now state a corollary, which is used to prove linear response. We first define the pullback of a distribution by $f_t$.
	\begin{definition}
		Let $u\in\mathbb{N}, s>0$ be such that $0<u+s<r$. Let $\theta\in(\mathcal{B}^{u,s})^*$. We define by duality the \textbf{pullback} of $\theta$ by $f_0$, written $f_t^*\theta$ :
		\begin{equation}
			\langle f_t^*\theta,\phi\rangle = \langle \theta, \mathcal{L}_t\phi\rangle, \forall \phi\in\mathcal{B}^{u,s} .
		\end{equation}
	\end{definition}
	By the change of variables formula, this definition corresponds to the pullback of an absolutely continuous measure with respect to the Lebesgue measure on $M$.
	
	\begin{corollary}
	\label{dual_lr}
	Let $u\in\mathbb{N}$, $s>0$ with $u\geq 2$ and $0<u+s<r$. Let $\theta\in(\mathcal{B}^{u-2,s+2})^*$.
	Then the map $t\mapsto\langle\theta, \rho_t\rangle$ is differentiable and
	\begin{equation}
		\partial_t\langle\theta, \rho_t\rangle = \sum_{k=0}^\infty \langle \nabla((f_t^k)^*\theta), \rho_t X_t\rangle \ .
	\end{equation}
	\end{corollary}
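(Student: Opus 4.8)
The plan is to derive the corollary directly from Theorem \ref{linear_response_measure} by applying the fixed functional $\theta$ to the differentiable family $\rho_t$, and then to rewrite the resulting series first via the pullback and then via an integration by parts. Since the hypotheses $u\geq 2$ and $0<u+s<r$ are exactly those of Theorem \ref{linear_response_measure}, the map $t\mapsto\rho_t$ is differentiable as a map into $\mathcal{B}^{u-2,s+2}$, with derivative given by the exponentially convergent series (\ref{sum_derivative_rho}). As $\theta\in(\mathcal{B}^{u-2,s+2})^*$ is a bounded, hence continuous, linear functional on that space, the scalar map $t\mapsto\langle\theta,\rho_t\rangle$ is differentiable and $\partial_t\langle\theta,\rho_t\rangle=\langle\theta,\partial_t\rho_t\rangle$. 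I would first note that $\mathrm{div}(\rho_tX_t)$ is a genuine element of $\mathcal{B}^{u-2,s+2}$: since $\rho_t$ is $\mathcal{L}_t$-invariant, $\mathrm{div}(\rho_tX_t)=\mathrm{div}((\mathcal{L}_t\rho_t)X_t)=-\mathcal{M}_t\rho_t$, which lies in $\mathcal{B}^{u-2,s+2}$ by Lemma \ref{diff_transfer}, so every pairing below is well posed.

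Next I would substitute (\ref{sum_derivative_rho}) and use continuity of $\theta$ together with the exponential convergence of the series in $\mathcal{B}^{u-2,s+2}$ to exchange $\theta$ with the sum, obtaining $\partial_t\langle\theta,\rho_t\rangle=-\sum_{k=0}^\infty\langle\theta,\mathcal{L}_t^k\mathrm{div}(\rho_tX_t)\rangle$. To introduce the pullback I would invoke the composition rule $\mathcal{L}_t^k=\mathcal{L}_{f_t^k}$ for the transfer operator: iterating the defining relation $\langle f_t^*\theta,\phi\rangle=\langle\theta,\mathcal{L}_t\phi\rangle$ then gives $\langle\theta,\mathcal{L}_t^k\phi\rangle=\langle(f_t^k)^*\theta,\phi\rangle$ for every $\phi\in\mathcal{B}^{u-2,s+2}$, and in particular for $\phi=\mathrm{div}(\rho_tX_t)$. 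This turns the series into $-\sum_{k=0}^\infty\langle(f_t^k)^*\theta,\mathrm{div}(\rho_tX_t)\rangle$, where each $(f_t^k)^*\theta$ remains in $(\mathcal{B}^{u-2,s+2})^*$ because $\mathcal{L}_t^k$ is bounded on $\mathcal{B}^{u-2,s+2}$.

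It then remains to transfer the divergence onto the test object. Because $M$ is boundaryless, for a $\mathcal{C}^r$ vector field $V$ one has $\langle\Theta,\mathrm{div}(V)\rangle=-\langle\nabla\Theta,V\rangle$ with $\Theta:=(f_t^k)^*\theta$, this being just the definition of the distributional gradient against smooth fields. I would extend it to $V=\rho_tX_t$ by approximating $\rho_t$ in $\mathcal{B}^{u,s}$ by $\mathcal{C}^r$ functions (density of $\mathcal{C}^r(M)$ in $\mathcal{B}^{u,s}$), applying the smooth identity to each approximant, and passing to the limit using boundedness of $\mathrm{div}:\mathcal{B}^{u,s}\to\mathcal{B}^{u-1,s+1}\hookrightarrow\mathcal{B}^{u-2,s+2}$ (Lemmas \ref{derivative_control} and \ref{compact_bus}) together with the continuity of $(f_t^k)^*\theta$. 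This limit simultaneously defines the pairing $\langle\nabla((f_t^k)^*\theta),\rho_tX_t\rangle$ and identifies it with $-\langle(f_t^k)^*\theta,\mathrm{div}(\rho_tX_t)\rangle$; collecting the two minus signs produces exactly the stated formula.

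The delicate point I expect is precisely this last integration by parts: both $\nabla((f_t^k)^*\theta)$ and $\rho_tX_t$ are distributions of limited regularity, so their pairing is not an honest product of functions and must be read through the divergence form, where it is manifestly controlled by the duality between $\mathcal{B}^{u-2,s+2}$ and its dual. The substance of the argument is therefore to verify that the smooth-approximation limit exists and is independent of the chosen sequence, which follows from the boundedness of the divergence operator between the relevant $\mathcal{B}^{u,s}$ spaces and the continuity of the functional $(f_t^k)^*\theta$. Once this is secured, the earlier exchange of $\theta$ with the sum and the exponential convergence of (\ref{sum_derivative_rho}) guarantee convergence of the final series, which completes the proof.
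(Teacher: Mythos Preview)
Your proposal is correct and follows essentially the same approach as the paper: apply Theorem \ref{linear_response_measure}, pair with $\theta$, swap $\theta$ with the exponentially convergent sum, rewrite each term via the pullback definition, and integrate by parts. The paper's own proof is a one-line computation that takes these same steps for granted; your version simply fills in the justifications (in particular the approximation argument legitimizing the distributional integration by parts against $\rho_t X_t$), which the paper omits.
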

	
	\begin{proof}
		This is a direct application of Theorem \ref{linear_response}, Definition \ref{dirac_observable_def} and integration by parts.
		\begin{align*}
	\partial_t(\langle \theta, \rho_t\rangle) &= - \sum_{k=0}^\infty \langle \theta, \mathcal{L}_t^k\mathrm{div}(X_t\rho_t)\rangle= -\sum_{k=0}^\infty \langle (f_t^{*})^k\theta, \mathrm{div}(X_t\rho_t)\rangle \\
			&= \sum_{k=0}^\infty \langle \nabla ((f_t^{*})^k\theta), X_t\rho_t\rangle .
		\end{align*}
	\end{proof}

	\section{Linear response for Dirac observables in dimension 2}
	\label{dimension 2}
	Our aim is to show a linear response result for observables localized on the level sets of a function $g:M\rightarrow \mathbb{R}$ --- that is, of the form $\theta = h\delta_{\mathcal{W}_a}$ where $h:M\rightarrow\mathbb{R}$ is a smooth function,  $a\in\mathbb{R}$ is a regular value of $g$, also where $\mathcal{W}_a=\{x\in M, g(x)=a\}\cap\mathrm{supp}(h)$ and $\delta_{\mathcal{W}_a}$ is as defined in Definition \ref{dirac_observable_def} (since $a$ is a regular value of $g$ and if $h$ is smooth enough, $\mathcal{W}_a$ is indeed a submanifold with boundary of $M$).
	
	In Section \ref{General case}, we will show a more general result, but we first present the --- simpler --- argument for $M$ of dimension $2$, in which case both stable and unstable directions are of dimension $1$. While in the general case we will require $\mathcal{W}_a$ to be \textit{foliated} by admissible stable leaves, in this simpler setting we  require  that $\mathcal{W}_a$ is an admissible stable leaf.
	
	Assume that $\mathrm{dim}(M)=2$.
	
	\begin{proposition}
		\label{dim2_prop}
		Let $W\in\Sigma$, and $Y_1,..., Y_p\in\mathcal{V}^r(W)$ with $p\in\mathbb{N}$. Let $k>p$ and $h\in\mathcal{C}_0^k(W)$. Let also $0 < s < r-p$. 
		
		If $s+p\leq k$, then $\theta := h(\partial_{Y_1}...\partial_{Y_p}\delta_W) \in (\mathcal{B}^{p,s})^*$.
	\end{proposition}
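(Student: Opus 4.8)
The plan is to verify directly the continuity estimate $|\langle \theta, \phi\rangle| \leq C \|\phi\|_{\mathcal{B}^{p,s}}$ on the dense subspace $\mathcal{C}^r(M)$ and then conclude by density. Since $\partial_{Y_1}\cdots\partial_{Y_p}\delta_W$ is a distribution of order $p$ supported on $W$, multiplying it by the function $h$ --- a priori defined only on $W$ --- first requires committing to a $\mathcal{C}^k$ extension of $h$ to a neighbourhood of $W$, still vanishing near $\partial W$ and with $\mathcal{C}^k$ norm controlled by $|h|_{\mathcal{C}^k(W)}$ (for instance extending $h$ to be constant along a transverse foliation and cutting off). With this extension fixed, the pairing reads
\[
\langle \theta, \phi\rangle = \langle \partial_{Y_1}\cdots\partial_{Y_p}\delta_W, \, h\phi\rangle.
\]

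Next I would integrate by parts, transferring all $p$ derivatives off $\delta_W$ onto the test function $h\phi$. Using $\langle \partial_Y S, \psi\rangle = -\langle S, \mathrm{div}(\psi Y)\rangle$ iteratively produces $\langle \delta_W, L^*(h\phi)\rangle = \int_W L^*(h\phi)\,d\mu_W$, where $L^*$ is a differential operator of order $p$ whose coefficients are built from the $Y_i$ together with their divergences and derivatives; in particular the coefficient of its $q'$-th order part is $\mathcal{C}^{r-(p-q')}$. Applying the Leibniz rule and collecting terms according to the number $q$ of derivatives falling on $\phi$, I obtain a finite sum of terms of the form $\int_W (\partial_{v_1}\cdots\partial_{v_q}\phi)\, a \, d\mu_W$ with $0\leq q\leq p$, each $v_i$ one of the $Y_j$, and each coefficient $a$ a product of a smooth vector-field coefficient with a derivative of $h$ of order at most $p-q$. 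Because every such term retains a factor which is a derivative of $h$, and $h$ vanishes near $\partial W$, each $a$ vanishes near $\partial W$ as well.

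The heart of the matter is the regularity bookkeeping for these coefficients. In the $q$-th term the $h$-factor has lost at most $p-q$ derivatives, so it is $\mathcal{C}^{k-(p-q)}$, while the vector-field factor is at least $\mathcal{C}^{r-(p-q)}$; hence $a \in \mathcal{C}_0^{\min(k,r)-(p-q)}(W)$. To land in the admissible class $\mathcal{C}_0^{q+s}(W)$ entering the definition of $\|\cdot\|_{q,s+q}$ I need $\min(k,r)-(p-q)\geq q+s$, that is both $k\geq p+s$ and $r> p+s$, which are precisely the hypotheses $s+p\leq k$ and $s<r-p$. Once this is secured, each term is controlled by the definition of the norm on $\mathcal{B}^{p,s}$: normalising the vector fields to $|v_i|_{\mathcal{C}^r}\leq 1$ and the coefficient to $|a|_{\mathcal{C}^{q+s}}\leq 1$ gives
\[
\Big| \int_W (\partial_{v_1}\cdots\partial_{v_q}\phi)\, a \,d\mu_W\Big| \leq |a|_{\mathcal{C}^{q+s}}\Big(\prod_i |v_i|_{\mathcal{C}^r}\Big)\,\|\phi\|_{q,\,s+q} \leq C\,\|\phi\|_{\mathcal{B}^{p,s}},
\]
using $q\leq p$ so that $\|\phi\|_{q,s+q}\leq\|\phi\|_{\mathcal{B}^{p,s}}$. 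Summing the finitely many terms yields the bound with $C$ depending only on $|h|_{\mathcal{C}^k}$, the $Y_i$, $p$ and $s$, and density of $\mathcal{C}^r(M)$ in $\mathcal{B}^{p,s}$ finishes the argument.

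I expect the main obstacle to be exactly this regularity accounting: one must track simultaneously how many derivatives are transferred to $h$ and how many to the vector-field coefficients in each Leibniz term, and check that the worst case is governed precisely by the two stated inequalities (the $h$-loss forcing $s\leq k-p$, the coefficient-loss forcing $s<r-p$). A secondary, purely technical point is the well-definedness of the product $h\,(\partial_{Y_1}\cdots\partial_{Y_p}\delta_W)$, which requires the controlled extension of $h$ above; the estimate, and hence membership in $(\mathcal{B}^{p,s})^*$, is insensitive to this choice up to the value of $C$.
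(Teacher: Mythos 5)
Your proposal is correct and follows essentially the same route as the paper: push the $p$ derivatives onto $h\phi$, expand by the Leibniz rule, and bound each resulting term $\int_W(\partial_{v_1}\cdots\partial_{v_q}\phi)\,a\,d\mu_W$ by $|a|_{\mathcal{C}^{q+s}}\,\|\phi\|_{q,s+q}\leq C\|\phi\|_{\mathcal{B}^{p,s}}$, with the two hypotheses $s+p\leq k$ and $s<r-p$ entering exactly where you say they do. The only differences are cosmetic: the paper takes $\langle\partial_{Y_1}\cdots\partial_{Y_p}\delta_W,\psi\rangle=\pm\int_W\partial_{Y_1}\cdots\partial_{Y_p}\psi\,d\mu_W$ directly, so no divergence correction terms appear, and it leaves implicit the extension of $h$ off $W$ that you rightly note is needed before $\partial_{Y_j}h$ makes sense.
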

	
	\begin{proof}
		Let $\phi\in\mathcal{C}^r(M)$.
		\begin{align*}
			|\langle \theta, \phi\rangle| &= |\int_W \partial_{Y_1}...\partial_{Y_p} (h \phi) d\mu_W| \\
			&\leq \sum_{\substack{I\subset \{1,...,p\}\\
				J = \{1,...,p\}\setminus I\\				
				i_1<...<i_m\in I\\
				j_1<...<j_{p-m}\in J}
				} | \int_W (\partial_{Y_{i_m}}...\partial_{Y_{i_1}}\phi) (\partial_{Y_{j_{p-m}}}...\partial_{Y_{j_1}} h)d\mu_W |\\
			&\leq \sum_{\substack{I\subset \{1,...,p\}\\
				J = \{1,...,p\}\setminus I\\				
				i_1<...<i_m\in I\\
				j_1<...<j_{p-m}\in J}} \mathrm{Vol}(W)(\sup_{1\leq i\leq p}|Y_i|_{\mathcal{C}^r})^m |\partial_{Y_{j_{p-m}}}...\partial_{Y_{j_1}} h|_{\mathcal{C}_0^{s+m}} \|\phi\|_{m,s+m}\\
			&\leq 2^p (\sup_{1\leq i\leq p}|Y_i|_{\mathcal{C}^r})^p \mathrm{Vol}(W) |h|_{\mathcal{C}_0^{k}} \|\phi\|_{\mathcal{B}^{p,s}} .
		\end{align*}
		Hence, by density, $\theta$ extends to a continuous linear form on $\mathcal{B}^{p,s}$.
	\end{proof}
	
	\begin{corollary}
		\label{level_set}
		Let $p\in\mathbb{N}$, $k>p$ and $h\in\mathcal{C}^k(M)$. Let $Y_1,...,Y_p$ be $\mathcal{C}^r$ vector fields over $M$. Let $g\in\mathcal{C}^{r+1}(M)$ and $a\in\mathbb{R}$ be such that $\mathcal{W}_a\in\Sigma$, with 
		\begin{equation}
		\mathcal{W}_a = \{x\in M | g(x)=a\} \cap V_h
		\end{equation}
		 and $V_h$ a neighbourhood of $\mathrm{supp}(h)$.
		 
		Let $0<s<r-p$. If $s+p\leq k$, then 
		\begin{equation}
		h(x)(\partial_{Y_1}...\partial_{Y_p}\delta_{\mathcal{W}_a})(x)=h(x)(\partial_{Y_1}...\partial_{Y_p}\delta (g(x)-a))\in(\mathcal{B}^{p,s})^*.
		\end{equation}
	\end{corollary}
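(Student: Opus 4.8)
The plan is to read this off from Proposition \ref{dim2_prop} with the choice $W = \mathcal{W}_a$, which is legitimate precisely because $\mathcal{W}_a \in \Sigma$ by hypothesis. Two points must be settled: first, that the global data $h, Y_1,\ldots,Y_p, g$ restrict to the leaf $W$ in a way that meets the hypotheses of Proposition \ref{dim2_prop}; and second, that the pullback Dirac $\delta(g(\cdot)-a)$ coincides, up to an explicit smooth nonvanishing factor, with the intrinsic leaf Dirac $\delta_{\mathcal{W}_a}$, so that the two expressions in the statement are indeed the same element of $(\mathcal{B}^{p,s})^*$.

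First I would verify the hypotheses of Proposition \ref{dim2_prop}. The fields $Y_1,\ldots,Y_p$ are $\mathcal{C}^r$ on all of $M$, hence their restrictions to a neighbourhood of $W$ lie in $\mathcal{V}^r(W)$. The crucial check is that $h|_W \in \mathcal{C}_0^k(W)$, i.e. that $h$ vanishes on a neighbourhood of the relative boundary $\partial W$: this is exactly the purpose of the neighbourhood $V_h \supset \mathrm{supp}(h)$ entering the definition $\mathcal{W}_a = \{g=a\}\cap V_h$, since $\partial W$ is contained in $\partial V_h$, which is disjoint from $\mathrm{supp}(h)$. As $h \in \mathcal{C}^k(M)$ and the numerical constraints $0<s<r-p$, $s+p\le k$ are assumed, Proposition \ref{dim2_prop} applies verbatim and yields $h\,(\partial_{Y_1}\ldots\partial_{Y_p}\delta_{\mathcal{W}_a}) \in (\mathcal{B}^{p,s})^*$.

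It then remains to pass to $\delta(g-a)$. Since $a$ is a regular value of $g$, $\nabla g$ does not vanish on $\{g=a\}$, and $g \in \mathcal{C}^{r+1}$ makes $|\nabla g|^{-1}$ a $\mathcal{C}^r$ function on a neighbourhood of $\mathcal{W}_a$. The coarea formula, equivalently the standard identity for the pullback of the one-dimensional Dirac distribution by the submersion $g$, gives, as distributions localized near $\mathcal{W}_a$,
\begin{equation*}
\delta(g(x)-a) = |\nabla g(x)|^{-1}\,\delta_{\mathcal{W}_a}(x) .
\end{equation*}
Expanding $\partial_{Y_1}\ldots\partial_{Y_p}\bigl(|\nabla g|^{-1}\delta_{\mathcal{W}_a}\bigr)$ by the Leibniz rule produces a finite sum of terms $g_J\,\partial_{Y_{i_1}}\ldots\partial_{Y_{i_m}}\delta_{\mathcal{W}_a}$ with $m\le p$, where each $g_J$ is a $\mathcal{C}^r$ function assembled from $|\nabla g|^{-1}$ and its $Y$-derivatives. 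Multiplying by $h$ and absorbing the smooth factor into the weight, $h g_J$ still vanishes near $\partial W$ and lies in $\mathcal{C}_0^{\min(k,r)}(W)$; since $m \le p$ we have $0<s<r-m$ and $s+m\le \min(k,r)$, so Proposition \ref{dim2_prop} applies to each summand. Summing the finitely many contributions gives $h\,(\partial_{Y_1}\ldots\partial_{Y_p}\delta(g-a)) \in (\mathcal{B}^{p,s})^*$, which completes the argument.

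The only real difficulty is this last identification: one must make the formula $\delta(g-a)=|\nabla g|^{-1}\delta_{\mathcal{W}_a}$ precise up to the boundary of the leaf and keep track, term by term in the Leibniz expansion, of the regularity of the smooth weights $g_J$ and of the numerical constraints required by Proposition \ref{dim2_prop}. Both are routine once the regularity afforded by $g \in \mathcal{C}^{r+1}$ is recorded, so no genuine obstacle beyond bookkeeping is expected.
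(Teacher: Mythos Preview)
Your argument is correct and follows the same route as the paper: apply Proposition~\ref{dim2_prop} with $W=\mathcal{W}_a$. The paper's own proof is a single sentence doing exactly this, tacitly identifying $\delta(g-a)$ with $\delta_{\mathcal{W}_a}$; your additional paragraph separating the two via the coarea factor $|\nabla g|^{-1}$ and running a Leibniz expansion is extra care the paper does not supply, but it is sound (one small bookkeeping slip: after $p-m$ derivatives the weights $g_J$ are only $\mathcal{C}^{r-(p-m)}$, not $\mathcal{C}^r$, yet $s+m\le r-p+m$ still holds, so Proposition~\ref{dim2_prop} applies to each summand as you claim).
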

	
	\begin{proof}
		This Corollary is a direct application of Proposition \ref{dim2_prop} with $W = \mathcal{W}_a$.
	\end{proof}
	
	We thus deduce a linear response result in dimension 2.

	\begin{theorem}
	\label{linear_response_dim2}
		Let $r\geq 3$, $\epsilon_0>0$, $t\mapsto f_t$ be, for $t\in(-\epsilon_0,\epsilon_0)$, a $\mathcal{C}^r$ family of $\mathcal{C}^{r+1}$ transitive Anosov diffeomorphisms  on a compact Riemannian manifold $M$ with $\mathrm{dim}(M)=2$. Let $X_t = (\partial_t f_t) \circ f_t^{-1}$.
		
		Let $h\in\mathcal{C}^r(M)$, $p\in\mathbb{N}$ be such that $p\leq r-3$ and $Y_1,...,Y_p$ be $\mathcal{C}^r$ vector fields on $M$. Let $g\in\mathcal{C}^{r+1}(M)$ and $a\in\mathbb{R}$ be such that $\mathcal{W}_a\in\Sigma$, with 
		\begin{equation}
		\mathcal{W}_a = \{x\in M | g(x)=a\} \cap V_h
		\end{equation}
		and $V_h$ a neighbourhood of $\mathrm{supp}(h)$.
		
		Let $\theta = h(\partial_{Y_1}...\partial_{Y_p}\delta_{\mathcal{W}_a}).$
		
		Then the map $t\mapsto\langle \theta, \rho_t\rangle$ is differentiable at $t=0$, and
		\begin{equation}
			\label{unmixing_lr}
			\partial_t(\langle \theta, \rho_t\rangle)|_{t=0} = - \langle \theta, (1-\mathcal{L}_0)^{-1}(\mathrm{div}(X_0\rho_0)\rangle \ .
		\end{equation}
		Furthermore, the derivative is the exponentially convergent sum :
		\begin{equation}
			\label{mixing_lr}
			\partial_t(\langle \theta, \rho_t\rangle)|_{t=0} = \sum_{k=0}^\infty \langle \nabla((f_0^{*})^k\theta), X_0\rho_0\rangle \ .
		\end{equation}
	\end{theorem}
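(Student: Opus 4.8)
The plan is to deduce the statement directly from Corollary \ref{dual_lr}, whose hypotheses require only that $\theta$ lie in the dual of a space $\mathcal{B}^{u-2,s'+2}$ into which $\rho_t$ is known to be differentiable. The entire content of the argument is therefore a matter of regularity bookkeeping: I must choose the smoothness exponent so that the dual space into which the Dirac observable $\theta$ naturally falls coincides with the one dictated by the linear response theorem, and it is precisely this matching that forces the hypothesis $p\leq r-3$.

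First I would apply Corollary \ref{level_set} with $k=r$ (legitimate since $h\in\mathcal{C}^r(M)$) and the given vector fields $Y_1,\dots,Y_p$. This shows that for any $s$ with $0<s<r-p$ (the requirement $s+p\leq k=r$ being then automatic, as $s+p<r$) one has $\theta = h(\partial_{Y_1}\cdots\partial_{Y_p}\delta_{\mathcal{W}_a})\in(\mathcal{B}^{p,s})^*$. Next I would match this membership against the requirement of Corollary \ref{dual_lr}, which asks for $\theta\in(\mathcal{B}^{u-2,s'+2})^*$ with $u\geq 2$, $s'>0$ and $0<u+s'<r$. Setting $u=p+2$ and $s'=s-2$, the membership $\theta\in(\mathcal{B}^{p,s})^*=(\mathcal{B}^{u-2,s'+2})^*$ holds, and the three conditions collapse to $s'=s-2>0$ together with $u+s'=p+s<r$, i.e. $2<s<r-p$. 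This interval is nonempty exactly when $r-p>2$, that is when $p\leq r-3$; this is the one place the hypothesis is used. Fixing any such $s$, Corollary \ref{dual_lr} applies and yields both the differentiability of $t\mapsto\langle\theta,\rho_t\rangle$ and, upon specializing to $t=0$, the exponentially convergent sum \eqref{mixing_lr}.

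The closed form \eqref{unmixing_lr} I would obtain by pairing $\theta$ with equation \eqref{derivative_rho} of Theorem \ref{linear_response_measure}. With the same parameters $u=p+2$, $s'=s-2$ (which satisfy the hypotheses of that theorem, since $u\geq 2$, $s'>0$ and $u+s'<r$), it gives $\partial_t\rho_t|_{t=0}=-(1-\mathcal{L}_0)^{-1}\mathrm{div}(\rho_0 X_0)\in\mathcal{B}^{u-2,s'+2}=\mathcal{B}^{p,s}$, an element against which $\theta\in(\mathcal{B}^{p,s})^*$ may be evaluated, producing \eqref{unmixing_lr} at once. The only point needing a word of care is that $\mathrm{div}(\rho_0 X_0)$ has vanishing Lebesgue integral on the boundaryless manifold $M$, so that it lies in the kernel of the spectral projector $\Pi_0$ and $(1-\mathcal{L}_0)^{-1}$ is genuinely applicable; this is furnished by Theorem \ref{linear_response_measure}.

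I expect no serious obstacle beyond the regularity matching of the first two paragraphs. The substantive work has already been done upstream: Corollary \ref{level_set} controls the order of the distribution $\theta$ (its $p$ tangential derivatives demand test functions of smoothness $s+p$, which is what raises the required exponent), while Theorem \ref{linear_response_measure} only delivers differentiability of $\rho_t$ in a space with two fewer derivatives, $\mathcal{B}^{u-2,s'+2}$. Balancing these two competing demands is exactly what pins down the admissible range $2<s<r-p$ and hence the sharp constraint $p\leq r-3$.
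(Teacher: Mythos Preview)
Your proposal is correct and follows essentially the same approach as the paper: both arguments choose $u=p+2$, invoke Corollary \ref{level_set} to place $\theta$ in the dual of the appropriate $\mathcal{B}^{p,\cdot}$ space, and then apply Theorem \ref{linear_response_measure} and Corollary \ref{dual_lr} to obtain \eqref{unmixing_lr} and \eqref{mixing_lr}. The only cosmetic difference is the parameter bookkeeping (the paper takes $0<s<\min(1,r-u)$ and works in $(\mathcal{B}^{u-2,s+2})^*$, whereas you take $2<s<r-p$ and work in $(\mathcal{B}^{p,s})^*$), and a minor imprecision in your phrase ``nonempty exactly when $p\leq r-3$'' (strictly speaking $r-p>2$ is equivalent to $p<r-2$, not $p\leq r-3$, for non-integer $r$; but since the hypothesis $p\leq r-3$ already guarantees $r-p\geq 3>2$, this does not affect the argument).
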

	Thus, we require that $r\geq 3+p$ to obtain linear response for observables constructed with derivatives of the Dirac up to order $p$.
	\begin{proof}
		Let $u\geq 2+p$, $0<s<\mathrm{min}(1,r-u)$.
		
		By Theorem \ref{linear_response_measure}, the map $t\mapsto\rho_t$ is differentiable in $\mathcal{B}^{u-2,s+2}$ and
		\begin{equation}
		\partial_t \rho_t|_{t=0} = -(1-\mathcal{L}_0)^{-1}(\mathrm{div} (X_0\rho_0)) .
		\end{equation}
		By Corollary \ref{level_set}, since $r\geq p+3 > s+2+p$, we have $\theta\in(\mathcal{B}^{u-2, s+2})^*$, showing equality (\ref{unmixing_lr}).
		
		Finally, (\ref{mixing_lr}) follows from Corollary \ref{dual_lr}.
	\end{proof}
	
	The reader should note that, since $\mathcal{W}_a$ is a submanifold of codimension $1$, it is not true that $\langle\theta,\mathcal{L} \phi\rangle = \langle\theta\circ f, \phi\rangle$ where $\phi\in\mathcal{C}^0(M)$ : Lebesgue measure on $f^{-1}(\mathcal{W}_a)$ is not the image of Lebesgue measure on $\mathcal{W}_a$ by $f^{-1}$. That is: $(f^*\theta)(y) \neq h(f(y))\delta(g(f(y))-a)$. 
	
	\section{Linear response for Dirac observables in higher dimensions}
	\label{General case}
	Assume now $d\geq 2$ is general.	
	
	It should be noted that Theorem \ref{linear_response_dim2} actually applies whenever the unstable direction is of dimension $1$.
	
	Generally, the submanifold $\mathcal{W}_a$ has codimension $1$, and hence cannot be an admissible stable leaf. Yet, we can show a result similar to Proposition \ref{dim2_prop} for embedded submanifolds foliated by admissible stable leaves, which leads to an analogue of Theorem \ref{linear_response_dim2}.
	
	The current section has mostly the same structure as Section \ref{dimension 2}, but we adapt our arguments to manage the fact that $\mathcal{W}_a$ may only be foliated by admissible stable leaves.
	
	\begin{definition}
		Let $N\subset M$ be an embedded submanifold of dimension $d'\leq d$. We say that $N$ is \textbf{foliated by admissible stable leaves} if there is a $\mathcal{C}^{r+1}$ atlas $(U_i,\psi_i)_{i\in I}$ of a neighbourhood of $N$ such that, if $i\in I$, $\psi_i^{-1}(\mathbb{R}^{d'}\times \{0\})=N\cap U_i$ and $x_u\in\mathbb{R}^{d'-d_s}$, then $\psi_i^{-1}(\mathbb{R}^{d_s}\times\{x_u\}\times\{0\})$ is an admissible stable leaf.
	\end{definition}
	
	We generalize Proposition \ref{dim2_prop} :
	\begin{proposition}
		\label{dual_prop_foliated}
		Let $N\subset M$ be an embedded submanifold of dimension $d'$ and assume it is foliated by admissible stable leaves. Let $Y_1,...,Y_p$ be $\mathcal{C}^r$ vector fields defined on a neighbourhood of $N$. Let $k\in\mathbb{N}$ be such that $k > p$, and let $h\in\mathcal{C}_0^k(N)$. Let $s>0$ be such that $0<p+s<r$.
		
		If $s+p\leq k$, then $\theta = h(\partial_{Y_1}...\partial_{Y_p}\delta_N)\in(\mathcal{B}^{p,s})^*$.
	\end{proposition}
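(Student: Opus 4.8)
The plan is to reduce the statement to the single-leaf estimate of Proposition \ref{dim2_prop} by \emph{slicing} $N$ along its foliation and integrating the resulting leafwise bounds over the transverse parameter. First I would exploit the $\mathcal{C}^{r+1}$ foliation atlas $(U_i,\psi_i)_{i\in I}$: since $h\in\mathcal{C}_0^k(N)$ has compact support, finitely many charts cover $\mathrm{supp}(h)$, and a subordinate partition of unity $(\chi_i)$ with $\sum_i\chi_i=1$ near $\mathrm{supp}(h)$ gives $\theta=\sum_i (h\chi_i)(\partial_{Y_1}\cdots\partial_{Y_p}\delta_N)$. It therefore suffices to bound each summand, i.e. to assume $h$ is supported in a single chart $U_i$, where $N\cap U_i$ is the product $\{(x_s,x_u,0)\}$ with $x_s\in\mathbb{R}^{d_s}$ running along the leaves $W_{x_u}:=\psi_i^{-1}(\mathbb{R}^{d_s}\times\{x_u\}\times\{0\})\in\Sigma$ and $x_u\in\mathbb{R}^{d'-d_s}$ transverse.

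Next I would apply a coarea/Fubini decomposition in this chart. Comparing the induced Riemannian volumes on $N$ and on the leaf $W_{x_u}$ in the coordinates $(x_s,x_u)$ produces a strictly positive Jacobian $J=\sqrt{\det g_N}/\sqrt{\det g_{W_{x_u}}}$, so that $\langle\theta,\phi\rangle=\pm\int_{x_u}\bigl(\int_{W_{x_u}}[\partial_{Y_1}\cdots\partial_{Y_p}(h\phi)]\,J\,d\mu_{W_{x_u}}\bigr)\,dx_u$. Because the charts are $\mathcal{C}^{r+1}$, the metric coefficients are $\mathcal{C}^r$ and $J\in\mathcal{C}^r$ is bounded with bounded inverse on the relevant compact region.

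For the inner integral over a fixed leaf $W_{x_u}$ I would run exactly the Leibniz expansion of Proposition \ref{dim2_prop}, the only difference being that $J$ is absorbed into the test-function side. Writing $\partial_{Y_I}$ for the composition of the $\partial_{Y_i}$, $i\in I$, the expansion $\partial_{Y_1}\cdots\partial_{Y_p}(h\phi)=\sum_{I\sqcup J=\{1,\dots,p\}}(\partial_{Y_I}\phi)(\partial_{Y_J}h)$ pairs with $J$ to give, for each $I$ with $|I|=m$, a term $\int_{W_{x_u}}(\partial_{Y_I}\phi)\,[(\partial_{Y_J}h)J]\,d\mu_{W_{x_u}}$. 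The restrictions of the $Y_i$ lie in $\mathcal{V}^r(W_{x_u})$ and play the role of the test vector fields, while $(\partial_{Y_J}h)J$ is the test function, so the definition of $\|\cdot\|_{m,s+m}$ bounds this term by $(\sup_i|Y_i|_{\mathcal{C}^r})^m\,|(\partial_{Y_J}h)J|_{\mathcal{C}_0^{m+s}(W_{x_u})}\,\|\phi\|_{m,s+m}\le C\,\|\phi\|_{\mathcal{B}^{p,s}}$, \emph{uniformly in $x_u$} since the norm $\|\cdot\|_{m,s+m}$ already takes a supremum over all of $\Sigma$. Here I would verify the two regularity constraints: $\partial_{Y_J}h\in\mathcal{C}^{k-(p-m)}$ forces $m+s\le k-p+m$, i.e. $p+s\le k$, which is the hypothesis, while $J\in\mathcal{C}^r$ suffices because $m+s\le p+s<r$; together these give $(\partial_{Y_J}h)J\in\mathcal{C}^{m+s}$. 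Admissibility of this test function — vanishing near $\partial W_{x_u}$ — follows from $h\in\mathcal{C}_0^k(N)$ after the localization, since on each leaf the support of $h$ stays in the interior of $W_{x_u}$.

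Finally, summing the finitely many subsets $I$ and integrating the uniform leafwise bound over the bounded transverse domain (so that $\int_{x_u}\mathrm{Vol}(W_{x_u})\,dx_u<\infty$) yields $|\langle\theta,\phi\rangle|\le C'\|\phi\|_{\mathcal{B}^{p,s}}$ for $\phi\in\mathcal{C}^r(M)$; density of $\mathcal{C}^r(M)$ in $\mathcal{B}^{p,s}$ then extends $\theta$ to an element of $(\mathcal{B}^{p,s})^*$. The main obstacle I expect is not the leafwise estimate, which is essentially Proposition \ref{dim2_prop}, but the bookkeeping of the slicing: justifying the Fubini decomposition with a $\mathcal{C}^r$ Jacobian and, above all, checking that after localization the functions $(\partial_{Y_J}h)J$ remain admissible test functions on each leaf (correct order of regularity and vanishing near the leaf boundary). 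This is precisely where the transversality built into being \emph{foliated by admissible stable leaves} and the support condition on $h$ enter.
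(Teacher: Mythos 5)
Your proposal is correct and follows essentially the same route as the paper: localize with a partition of unity subordinate to the foliation atlas, slice the integral over $N$ by Fubini into leafwise integrals over the admissible stable leaves $W_{x_u}$, apply the single-leaf estimate of Proposition \ref{dim2_prop} (which you re-derive via the Leibniz expansion rather than cite, absorbing the chart Jacobian into the test function), and integrate the uniform bound over the bounded transverse parameter. Your extra bookkeeping on the regularity of the Jacobian and the admissibility of $(\partial_{Y_J}h)J$ on each leaf only makes explicit what the paper leaves implicit.
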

	
	\begin{proof}
		Let $(U_i,\psi_i)$ be a $\mathcal{C}^{r+1}$ atlas adapted to the foliation of $N$ and $\alpha_i$ an adapted partition of unity. Let $\phi\in\mathcal{C}^r(M)$, and $V_i = \psi_i(U_i\cap N)$. Then: 
		
		\begin{align*}
			|&\int_N \partial_{Y_1}...\partial_{Y_p}(\phi h)| = |\int_N \sum_{i\in I} \alpha_i \partial_{Y_1}...\partial_{Y_p}(\phi h) | \\
			&\leq \sum_{i\in I} |\int_{V_i} \frac{\alpha_i\partial_{Y_1}...\partial_{Y_p}(\phi h)}{|\mathrm{Det}D\psi_i|}\circ\psi_i^{-1}dx_1...dx_{d'}|\\
			&\leq \sum_{i\in I} \int_{(\mathbb{R}^{d'-d_s}\times\{0\})\cap V_i} \left[ \int_{(\mathbb{R}^{d_s}\times\{(x_u,0)\})\cap V_i}\frac{|\alpha_i\partial_{Y_1}...\partial_{Y_p}(\phi h)|}{|\mathrm{Det}D\psi_i|}\circ\psi_i^{-1}(x)dx_s \right] dx_u \\
			&\leq C \sum_{i\in I}  \mathrm{Vol}(U_i\cap N) \biggl|\frac{\alpha_i h}{|\mathrm{Det}D\psi_i|}\biggr|_{\mathcal{C}_0^k}\|\phi\|_{\mathcal{B}^{p,s}} \\
			&\leq C'\|\phi\|_{\mathcal{B}^{p,s}} \  .
		\end{align*}
		The penultimate inequality comes from Proposition \ref{dim2_prop} applied to each admissible stable leaf in the foliation.
		
		Hence $\theta$ extends by density to a continuous linear form on $\mathcal{B}^{p,s}$, and thus on $\mathcal{B}^{u,s}$.
	\end{proof}
	
	Note that, since $M$ is trivially foliated by admissible stable leaves, Proposition \ref{dual_prop_foliated} shows that the Lebesgue measure is in the dual of spaces $\mathcal{B}^{u,s}$, a fact used earlier to state that it was the fixed point of the dual operator $\mathcal{L}^*$.
	
	We can now state our main result:
	\begin{theorem}
		\label{linear_response}
		Let $r\geq 3$, $\epsilon_0>0$, $t\mapsto f_t$ be, for $t\in(-\epsilon_0,\epsilon_0)$, a $\mathcal{C}^r$ family of $\mathcal{C}^{r+1}$ transitive Anosov diffeomorphisms  on a compact Riemannian manifold $M$. Let $X_t = (\partial_t f_t) \circ f_t^{-1}$.
		
		Let $h\in\mathcal{C}^r(M)$. Let $p\in\mathbb{N}$ be such that $p\leq r-3$ and $Y_1,... Y_p$ be $\mathcal{C}^r$ vector fields on $M$.
		Let $g\in\mathcal{C}^{r+1}(M)$, $V_h$ a neighbourhood of $\mathrm{supp}(h)$, $a\in\mathbb{R}$ such that $\mathcal{W}_a = \{g(x)=a\}\cap V_h$ admits a $\mathcal{C}^{r+1}$ foliation by admissible stable leaves.
		
		Then the map $t\mapsto\langle h(\partial_{Y_1}...\partial_{Y_p}\delta)(g-a), \rho_t\rangle =: \langle \theta, \rho_t\rangle$ is differentiable at $t=0$ and
		\begin{equation}
			\partial_t(\langle \theta, \rho_t\rangle)|_{t=0} = - \langle \theta, (1-\mathcal{L}_0)^{-1}(\mathrm{div}(X_0\rho_0)\rangle \  .
		\end{equation}
		Furthermore, the derivative is the exponentially convergent sum
		\begin{equation}
			\partial_t(\langle \theta, \rho_t\rangle)|_{t=0} = \sum_{k=0}^\infty \langle \nabla((f_0^{*})^k\theta), X_0\rho_0\rangle \  .
		\end{equation}
	\end{theorem}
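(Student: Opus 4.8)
The plan is to mirror the proof of Theorem \ref{linear_response_dim2} almost verbatim, substituting the foliated duality estimate Proposition \ref{dual_prop_foliated} for Corollary \ref{level_set}. The genuine analytic content is already in place: differentiability of the SRB measure (Theorem \ref{linear_response_measure}) and the bound placing $\theta$ in the appropriate dual (Proposition \ref{dual_prop_foliated}). Consequently the proof reduces to a parameter count that forces both ingredients to live on the same pair of spaces, together with the elementary observation that pairing a bounded functional against a differentiable Banach-valued curve is differentiable.

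Concretely, I would first fix $u = p+2$ and choose $s$ with $0 < s < 1$. Because $p \leq r-3$ we have $r - u = r - p - 2 \geq 1$, so $0 < u + s < r$ and $u = p+2 \geq 2$; Theorem \ref{linear_response_measure} then applies and yields that $t\mapsto\rho_t$ is differentiable in $\mathcal{B}^{u-2,s+2}$ with $\partial_t\rho_t|_{t=0} = -(1-\mathcal{L}_0)^{-1}\mathrm{div}(X_0\rho_0)$. Next I would place $\theta$ in the dual space $(\mathcal{B}^{u-2,s+2})^*$. This is exactly where the hypothesis enters: $\mathcal{W}_a$ carries a $\mathcal{C}^{r+1}$ foliation by admissible stable leaves, so Proposition \ref{dual_prop_foliated} applies with $N = \mathcal{W}_a$. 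Since $V_h$ is a neighbourhood of $\mathrm{supp}(h)$, the restriction $h|_{\mathcal{W}_a}$ vanishes near $\partial\mathcal{W}_a$, so $h\in\mathcal{C}_0^r(\mathcal{W}_a)$ and I may take $k=r$. Applying the proposition with second index $s+2$ in the role of $s$, the required inequalities $0 < p + (s+2) < r$ and $(s+2)+p \leq r$ both hold because $s<1$ and $p\leq r-3$, so $\theta\in(\mathcal{B}^{p,s+2})^*$; as $u-2 = p$ this is precisely $(\mathcal{B}^{u-2,s+2})^*$.

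With $\partial_t\rho_t$ living in $\mathcal{B}^{u-2,s+2}$ and $\theta$ in its dual, the map $t\mapsto\langle\theta,\rho_t\rangle$ is differentiable as the composition of the differentiable curve $t\mapsto\rho_t$ with the fixed bounded functional $\theta$, and
\[
\partial_t\langle\theta,\rho_t\rangle|_{t=0}
= \langle\theta,\partial_t\rho_t|_{t=0}\rangle
= -\langle\theta,(1-\mathcal{L}_0)^{-1}\mathrm{div}(X_0\rho_0)\rangle ,
\]
which is the first asserted formula. The exponentially convergent series then follows directly from Corollary \ref{dual_lr}, whose hypotheses ($u\geq 2$ and $\theta\in(\mathcal{B}^{u-2,s+2})^*$) have just been verified.

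I do not anticipate a real obstacle, precisely because the two hard results are quoted. The only point demanding care is the regularity bookkeeping, and I would want to make it explicit. Three separate costs must be absorbed: the $p$ transverse derivatives in $\theta$ consume one unit in the first index (forcing $u-2 \geq p$, hence $u = p+2$); passing from $\rho_t\in\mathcal{B}^{u,s}$ to its derivative costs two further orders (landing in $\mathcal{B}^{u-2,s+2}$); and the duality estimate of Proposition \ref{dual_prop_foliated} needs $p + s + 2 < r$. The single hypothesis $p \leq r-3$ is exactly what reconciles all three simultaneously, and I would flag that it is sharp for this argument, just as in the two-dimensional case.
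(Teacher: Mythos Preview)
Your proposal is correct and follows essentially the same approach as the paper: the paper's own proof is the single sentence ``Follow the proof of \ref{linear_response_dim2}, replacing Proposition \ref{dim2_prop} by Proposition \ref{dual_prop_foliated}.'' Your explicit parameter bookkeeping ($u=p+2$, $0<s<1$, $k=r$) and the remark that $h|_{\mathcal{W}_a}\in\mathcal{C}_0^r(\mathcal{W}_a)$ because $V_h\supset\mathrm{supp}(h)$ merely spell out what the paper leaves implicit.
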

	\begin{proof}
		Follow the proof of \ref{linear_response_dim2}, replacing Proposition \ref{dim2_prop} by Proposition \ref{dual_prop_foliated}.
	\end{proof}

	\section{Applications}
	\label{Applications}

	In this section, we provide applications of Theorems \ref{linear_response_dim2} and \ref{linear_response} to a specific linear hyperbolic automorphism of the 2-torus $\mathbb{T}^2=\mathbb{R}^2/\mathbb{Z}^2$, the so-called 'cat map'.

	Let \begin{equation}
	F:\left\{\begin{array}{ll}
	\mathbb{R}^2\rightarrow\mathbb{R}^2\\
	(x,y)\mapsto(2x+y, x+y)\end{array}\right.
	\end{equation}
	
	and $f:\mathbb{T}^2\rightarrow\mathbb{T}^2$ be the induced quotient map on the torus.
	
	Let $\pi : \mathbb{R}^2\rightarrow\mathbb{T}^2$ be the projection on the torus.
	
	It is well-known \cite[§1.8, 6.4]{kh_book} that $f$ is a smooth transitive Anosov diffeomorphism, with unstable and stable manifolds all obtained by translation of those of the point $(0,0)$, an expansion rate of $\lambda = \dfrac{3+\sqrt{5}}{2}$ in the direction given by $\left(\begin{array}{cc}1\\\dfrac{\sqrt{5}-1}{2}\end{array}\right)$ and a contraction rate of $\nu=\lambda^{-1}=\dfrac{3-\sqrt{5}}{2}$ in the direction given by $\left(\begin{array}{cc}1\\\dfrac{-\sqrt{5}-1}{2}\end{array}\right)$. Throughout this section, we will note $(s,u)$ the coordinates in $\mathbb{R}^2$ in the direct orthonormal frame given by the stable and unstable directions.
	
	 Furthermore, since $\mathrm{Det}F=1$ everywhere, $f$ is volume-preserving. Hence its SRB measure is simply the Lebesgue measure on $\mathbb{T}^2$.

	Assume $(f_t)_{t\in I_0}$ is a $\mathcal{C}^4$-family of transitive Anosov $\mathcal{C}^3$-diffeomorphisms of $\mathbb{T}^2$ such that $f_0=f$.
	As before, we define the vector field $X(y)=\partial_tf_t(f_t^{-1}(y))|_{t=0}$ for $y\in\mathbb{T}^2$ and $\rho_t$ the SRB measure of $f_t$.
	
	We will show three results of linear response in the case of perturbation of the cat map:
	\begin{itemize}
		\item For observables supported on a stable line;
		\item For observables supported on a line that is not parallel to the unstable direction;
		\item For a limited class of observables supported on a small circle. The observables have to cancel in a neighbourhood of all points at which the circle is tangent to the unstable direction. This is a toy model for Dirac observables on level sets around a critical point.
	\end{itemize}

	\subsection{Dirac observables along a stable line}
	
	Let $x_0\in \mathbb{T}^2$ and $E^s(x_0)$ the stable line going through $x$. Let $X_0\in\mathbb{R}^2$ be such that $\pi(X_0) = x_0$
	Without loss of generality, we assume that $X_0 = (0,0)$ and write $E^s$ for $E^s(x_0)$.

	\begin{proposition}
		\label{stable_line_prop}
		Let $V$ be a small enough open neighbourhood of $x_0$ and $h:\mathbb{T}^2\rightarrow\mathbb{R}$ be of class $\mathcal{C}^3$ be such that $\mathrm{supp}(h)\subset V$.Let $g:\mathbb{T}^2\rightarrow\mathbb{R}$ such that in the connected component of $X_0$ in $\pi^{-1}(V)$ holds $g(\pi(X_0 + (s,u))) = u$.
		Let $\mathcal{W}_0 = V\cap \{g=0\}$ and $\theta=h\delta(g)$.
		
		Then the map $t\mapsto\langle\theta,\rho_t\rangle$ is differentiable at $t=0$.
		
		 Furthermore, its derivative is the exponentially convergent sum
		\begin{equation}
		\label{result_stable_line_torus}
			\partial_t\langle\theta,\rho_t\rangle|_{t=0} = -\sum_{k=0}^\infty \nu^k\int_{f^{-k}(E^s)} h(f^k(x_s))(\mathrm{div}X)(x_s)dx_s \ ,
		\end{equation}
		
		where $dx_s$ is the Lebesgue measure on $E^s$.
	\end{proposition}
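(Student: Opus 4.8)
The plan is to derive the differentiability and the first form of the derivative directly from Theorem \ref{linear_response_dim2}, specialised to $p=0$ and $r=3$, and then to evaluate the resulting series explicitly, exploiting the very rigid geometry of the cat map (constant $Df$, Lebesgue SRB measure, straight invariant stable line).

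First I would verify the hypotheses of Theorem \ref{linear_response_dim2}. Here $h\in\mathcal{C}^3=\mathcal{C}^r$ and $p=0$, so $p\leq r-3$ holds; the function $g$ is locally the linear coordinate $u$, hence smooth with $0$ a regular value, and $\mathcal{W}_0=V\cap\{g=0\}$ is a segment of the stable line $E^s$ of the fixed point $(0,0)$. The only genuinely geometric point is to check that, for $V$ small enough, $\mathcal{W}_0\in\Sigma$. Since for the cat map the stable direction is constant and $E^s$ is a straight line coinciding with the stable manifold, in an admissible chart $\psi_i$ adapted to $E^s$ the leaf $\mathcal{W}_0$ is the graph of the zero function over a small ball; shrinking $V$ so that this segment meets the size constraints of Definition \ref{admissible_stable_leaves} (namely $6K_1\delta<\min_i\tau_i$, with $|D\chi|=0<c_i$) shows $\mathcal{W}_0\in\Sigma$. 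Corollary \ref{level_set} then gives $\theta\in(\mathcal{B}^{u-2,s+2})^*$ for $u=2$ and $0<s<1$, and Theorem \ref{linear_response_dim2} yields differentiability at $t=0$ together with
\begin{equation*}
\partial_t\langle\theta,\rho_t\rangle|_{t=0}=-\langle\theta,(1-\mathcal{L}_0)^{-1}\mathrm{div}(X\rho_0)\rangle=-\sum_{k=0}^\infty\langle\theta,\mathcal{L}_0^k\,\mathrm{div}(X\rho_0)\rangle,
\end{equation*}
the second equality expanding $(1-\mathcal{L}_0)^{-1}=\sum_k\mathcal{L}_0^k$ on the zero-average subspace containing $\mathrm{div}(X\rho_0)$.

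Next I would make each term explicit. Because $\mathrm{Det}Df\equiv1$, one has $\mathcal{L}_0\phi=\phi\circ f^{-1}$, hence $\mathcal{L}_0^k\phi=\phi\circ f^{-k}$; and because $\rho_0$ is Lebesgue measure (constant density), $\mathrm{div}(X\rho_0)$ is the distribution with density $\mathrm{div}X$. Thus $\mathcal{L}_0^k\mathrm{div}(X\rho_0)=(\mathrm{div}X)\circ f^{-k}$ and, by Definition \ref{dirac_observable_def},
\begin{equation*}
\langle\theta,\mathcal{L}_0^k\mathrm{div}(X\rho_0)\rangle=\int_{\mathcal{W}_0}h(x)\,(\mathrm{div}X)(f^{-k}(x))\,d\mu_{\mathcal{W}_0}(x).
\end{equation*}
I would then change variables $x=f^k(x_s)$ with $x_s\in f^{-k}(E^s)=E^s$ (the stable line being $f$-invariant). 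Since $f$ contracts $E^s$ by the constant rate $\nu$, the one-dimensional Jacobian of $f^k$ along $E^s$ is exactly $\nu^k$, so $d\mu_{\mathcal{W}_0}(x)=\nu^k\,dx_s$ while the factor $h(f^k(x_s))$ confines the integrand to $f^{-k}(\mathrm{supp}\,h)$. This gives $\langle\theta,\mathcal{L}_0^k\mathrm{div}(X\rho_0)\rangle=\nu^k\int_{f^{-k}(E^s)}h(f^k(x_s))(\mathrm{div}X)(x_s)\,dx_s$, and summing with the leading minus sign produces exactly (\ref{result_stable_line_torus}).

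The per-term identity is elementary; the delicate point, which I would deliberately not try to obtain from the pointwise formula, is the convergence of the series. Indeed $f^{-k}(\mathrm{supp}\,h)$ has length of order $\lambda^k$ whereas the prefactor is $\nu^k=\lambda^{-k}$, so the naive bound on the $k$-th term is only $O(1)$ and shows no decay. The exponential convergence comes instead from the spectral gap of $\mathcal{L}_0$ on $\mathcal{B}^{u-2,s+2}$, i.e.\ from the exponential decay there of $\mathcal{L}_0^k\mathrm{div}(X\rho_0)$ (reflecting mixing-driven cancellation, using $\int_M\mathrm{div}X\,dx=0$), which is precisely what Theorem \ref{linear_response_dim2} already packages. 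I would therefore invoke that theorem for convergence and use the computation above only to identify the limit. The main obstacle is thus the bookkeeping: confirming $\mathcal{W}_0\in\Sigma$ and carrying out the stable-direction change of variables with the correct Jacobian $\nu^k$.
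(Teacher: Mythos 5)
Your proposal is correct and follows essentially the same route as the paper: verify that $\mathcal{W}_0$ is an admissible stable leaf (the paper phrases this as being trivially foliated by admissible stable leaves and invokes Theorem \ref{linear_response} with $p=0$ rather than Theorem \ref{linear_response_dim2}, an immaterial difference here), expand $-(1-\mathcal{L}_0)^{-1}\mathrm{div}(X\rho_0)$ as the series $-\sum_k \mathcal{L}_0^k(\mathrm{div}X)$ using $|\mathrm{Det}Df|\equiv 1$ and $\rho_0=$ Lebesgue, and perform the change of variables $x=f^k(x_s)$ along $E^s$ with Jacobian $\nu^k$. Your added remark that convergence comes from the spectral gap and not from the pointwise $O(1)$ term bound is consistent with the discussion the paper gives just before its proof.
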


	Before moving on to the proof of this proposition, we give some brief remarks. Let $u\geq3$, $s>0$. It is known, for example through the theory of dynamical determinants \cite{baladibook16}, that $1$ is the only element of the spectrum of $\mathcal{L}_0$ on $\mathcal{B}^{u-2, s+2}$ outside of the disc of radius $r_{ess}(\mathcal{L}_0)\leq\max(\lambda^{-(u-2)},\nu^{s+2})$. 
	
	Thus, since $\Pi_0\mathrm{div(X_0)}=0$, the decay of $\langle\theta,\mathcal{L}_0^k(\mathrm{div}(X_0))\rangle$ is faster than $$r_{ess}(\mathcal{L}_0, \mathcal{B}^{u-2,s+2})^k = \max(\lambda^{-(u-2)},\nu^{s+2})^k \ .$$
	
	Since $f$ is $\mathcal{C}^\infty$, if the family $t\mapsto f_t$ is also $\mathcal{C}^\infty$, one may choose $u$ and $s$ arbitrarily large. Thus the sum in (\ref{result_stable_line_torus}) converges faster than any geometric sum.
	
	This is not contradictory with Proposition \ref{stable_line_prop}, since nothing is claimed about the speed of decay of $\int_{f^{-k}(E^s)} h(f^k(x_s))(\mathrm{div}X)(x_s)dx_s$. 
	\begin{proof}
		$\mathcal{W}_0$ is a portion of the stable manifold of $f$ at $x_0$, hence obviously   an admissible stable leaf, thus it is trivially foliated by admissible stable leaves.
		Let $g(x,y)=x-\dfrac{1+\sqrt{5}}{2}y$. Then the level sets of $g$ are the stable lines of $f$.
		
		By Theorem \ref{linear_response}, applied with $p=0$, the map $t\mapsto\langle\theta,\rho_t\rangle$ is differentiable at $0$ and:
		\begin{align*}
			\partial_t\langle\theta,\rho_t\rangle|_{t=0} &= -\sum_{k=0}^\infty \int_{\mathcal{W}_0} h(s)\mathcal{L}^k(\mathrm{div}X)(s)ds \\
			&= -\sum_{k=0}^\infty \int_{E^s} h(s)\mathcal{L}^k(\mathrm{div}X)(s)ds \\
			&= -\sum_{k=0}^\infty \int_{E^s} h(s)(\mathrm{div}X)(f^{-k}(s))ds \ ,
		\end{align*}
since $|\mathrm{Det} Df|=1$ everywhere.
		
		This integral runs along a stable line, on which $f$ acts homothetically with contraction rate $\nu$. Thus, for $k\in\mathbb{N}$, the change of variables $s=f^k(x_s)$ yields $ds=\nu^k dx_s$ and 
		\begin{equation*}
			\partial_t\langle\theta,\rho_t\rangle |_{t=0}= -\sum_{k=0}^\infty \nu^k\int_{f^{-k}(E^s)} h(f^k(x))(\mathrm{div}X)(x)dx \ , 
		\end{equation*}
		showing (\ref{result_stable_line_torus}).
	\end{proof}
	
	\begin{figure}[h]
		\centerline{\includegraphics[width= 15cm]{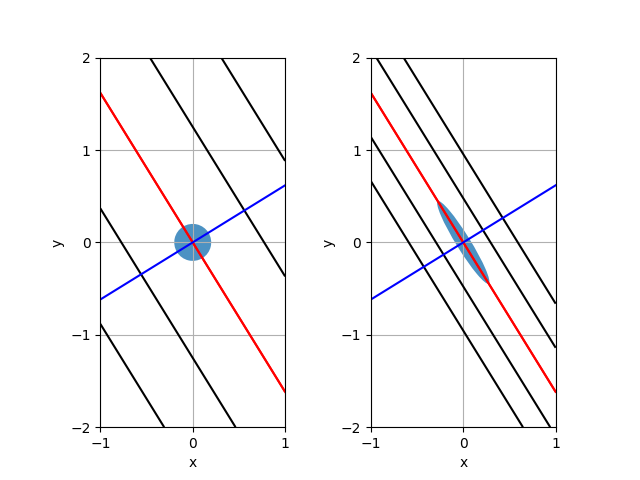}}
		\caption{Behavior of the support of a Dirac observable on the stable manifold under the reverse dynamics. The two axes through the origin are the unstable and stable directions. Black and red lines are level sets for the function $g(u,s) = u$ with the red line being the level set  $\{g=0\}$. In the left side picture, the shaded area is the support of $h$. The right side picture is the image of the left side picture by $f^{-1}$. Both pictures are in $\mathbb{R}^2$, each of the grid squares corresponds to a fundamental domain of the torus.}
	\end{figure}

	\subsection{Dirac observables along a non-unstable line}

		We now turn to observables supported on a non-unstable line.

		\begin{proposition}
			\label{generic_line_prop}
			
			Let $\alpha > 0$ and $D$ be the projection in $\mathbb{T}^2$ of the line given by the equation $u=\alpha s$, where $(s,u)\in\mathbb{R}^2$ are the coordinates in the orthonormal frame of the stable and unstable directions.
			
			Let $V$ be a small enough open neighbourhood of $0$ and $h:\mathbb{T}^2\rightarrow\mathbb{R}$ be of class $\mathcal{C}^3$ such that $\mathrm{supp}(h)\subset V$. Let $g:\mathbb{T}^2\rightarrow\mathbb{R}$ be such that in the connected component of $0$ in $\pi^{-1}(V)$ holds $g(\pi(s,u))=u-\alpha s$. 
			
			Let $\mathcal{W}_0 = V\cap D = V\cap\{g=0\}$ and let $\theta=h\delta(g)$.
			
			Then the map $t\mapsto\langle\theta,\rho_t\rangle$ is differentiable at $t=0$.
			
			Furthermore, its derivative is the exponentially convergent sum
		\begin{equation}
		\label{result_generic_line_torus}
			\partial_t\langle\theta,\rho_t\rangle|_{t=0} = -\sum_{k=0}^\infty \nu^k\sqrt{\dfrac{1+\alpha^2}{1+\alpha^2\nu^{4k}}}\int_{f^{-k}(D)} h(f^k(x))(\mathrm{div}X)(x)dx 
		\end{equation}
		where $dx$ is the Lebesgue measure on $f^{-k}(D)$ for each $k\geq 0$.
		\end{proposition}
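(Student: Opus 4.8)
The plan is to run the argument of Proposition~\ref{stable_line_prop}, the single new difficulty being that for a general slope $\alpha$ the line $D$ is \emph{not} an admissible stable leaf: its tangent direction $(1,\alpha)$ leaves the stable cone as soon as $\alpha$ exceeds the cone aperture $c_i$. Hence $\theta=h\delta(g)$ need not belong to $(\mathcal{B}^{u-2,s+2})^*$ and Theorem~\ref{linear_response} (with $p=0$) cannot be invoked as it stands. The way out is the reverse dynamics, which flattens $D$ onto the stable direction. Since the stable and unstable directions of the cat map are orthogonal, in the frame $(s,u)$ the map acts as $f(s,u)=(\nu s,\nu^{-1}u)$, so $f^{-k}$ sends $D=\{u=\alpha s\}$ to $\{u=\alpha\nu^{2k}s\}$. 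As $\alpha\nu^{2k}\to 0$, there is an $N$ such that $f_0^{-k}(D)$ --- and, by $\mathcal{C}^1$-closeness for fixed $k$ and small $t$, also $f_t^{-k}(D)$ --- is an admissible stable leaf for every $k\geq N$. This flattening is exactly what the hypothesis ``$D$ not parallel to the unstable direction'' guarantees.

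First I would use this to give $\langle\theta,\rho_t\rangle$ a rigorous meaning. Using the invariance $\mathcal{L}_t\rho_t=\rho_t$ and the identity $\langle(f_t^*)^N\theta,\phi\rangle=\langle\theta,\mathcal{L}_t^N\phi\rangle$, I set
\begin{equation}
\langle\theta,\rho_t\rangle := \langle(f_t^*)^N\theta,\rho_t\rangle ,
\end{equation}
which is independent of $N$ for $N$ large (again by invariance). This is legitimate because $(f_t^*)^N\theta$ is now an admissible Dirac observable, hence lies in $(\mathcal{B}^{u-2,s+2})^*$ by Proposition~\ref{dual_prop_foliated} (after cutting $f_t^{-N}(\mathcal{W}_0)$ into admissibly small pieces by a partition of unity). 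At $t=0$ this returns $\int_D h\,d\mu_D$, since $\rho_0\equiv 1$.

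To differentiate at $t=0$ I would split, using $\rho_0\equiv 1$ and the pullback identity,
\begin{equation}
\langle\theta,\rho_t\rangle-\langle\theta,\rho_0\rangle = \langle(f_t^*)^N\theta,\ \rho_t-\rho_0\rangle + \langle\theta,\ (\mathcal{L}_t^N-\mathcal{L}_0^N)\rho_0\rangle .
\end{equation}
In the first term $(f_t^*)^N\theta$ is bounded and Lipschitz in $t$ in $(\mathcal{B}^{u-2,s+2})^*$, while $\rho_t-\rho_0=t\,\partial_t\rho_0+o(t)$ in $\mathcal{B}^{u-2,s+2}$ by Theorem~\ref{linear_response_measure}; with $\partial_t\rho_0=-\sum_k\mathcal{L}_0^k\mathrm{div}(X\rho_0)$ this contributes $-t\sum_{k\geq N}\langle(f_0^*)^k\theta,\mathrm{div}(X\rho_0)\rangle+o(t)$. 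In the second term the key point is that $\mathcal{L}_t^N\rho_0=\mathcal{L}_t^N 1$ is an honest $\mathcal{C}^{r-1}$ \emph{function} (the transfer operator preserves smoothness), so pairing it with $\theta$ is mere integration of a function over $D$ and needs no admissibility; expanding $\partial_t\mathcal{L}_t^N 1|_{t=0}=-\sum_{i=0}^{N-1}\mathcal{L}_0^i\mathrm{div}(X\rho_0)$ this contributes $-t\sum_{i=0}^{N-1}\langle(f_0^*)^i\theta,\mathrm{div}(X\rho_0)\rangle+o(t)$. The two partial sums recombine into the full series, giving
\begin{equation}
\partial_t\langle\theta,\rho_t\rangle|_{t=0}=-\sum_{k=0}^\infty\langle(f_0^*)^k\theta,\mathrm{div}(X\rho_0)\rangle .
\end{equation}

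Finally, exactly as in Proposition~\ref{stable_line_prop}, each term equals $\langle\theta,\mathcal{L}_0^k\mathrm{div}X\rangle=\int_D h\,(\mathrm{div}X)\circ f^{-k}\,d\mu_D$ (using $|\mathrm{Det}Df|=1$ and $\rho_0\equiv1$), and the substitution $x=f^k(y)$ along the line produces the metric distortion: $Df^k$ sends a unit tangent of $f^{-k}(D)$ to a vector of length $\nu^k\sqrt{(1+\alpha^2)/(1+\alpha^2\nu^{4k})}$, which is precisely the coefficient in (\ref{result_generic_line_torus}); the series converges geometrically since this factor is $\leq C\nu^k$. The main obstacle is the first step: checking that $f_t^{-N}(D)$ is genuinely admissible uniformly for small $t$ and that $t\mapsto(f_t^*)^N\theta$ is bounded and Lipschitz in the dual norm, for this is what both legitimizes the definition of $\langle\theta,\rho_t\rangle$ and controls the first term of the splitting. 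Everything after the recombined series is the computation already carried out for the stable line.
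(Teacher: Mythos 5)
Your argument is correct in substance, but it takes a genuinely different route from the paper's. Both proofs confront the same obstruction --- the slope $\alpha$ may exceed the aperture of the stable cone, so $D$ need not be an admissible stable leaf --- but resolve it on opposite sides of the duality. The paper modifies the \emph{spaces}: it passes to an iterate $f^m$, shows (Lemma \ref{kappa_infty}) that the admissible cone aperture $\kappa(m)$ for $f^m$ tends to infinity, so that for $m$ large $\mathcal{W}_0$ \emph{is} an admissible stable leaf for the spaces adapted to $f^m$; it then applies Theorem \ref{linear_response} verbatim to the family $f_t^m$ (which has the same SRB measures) and uses the cocycle identity $\mathrm{div}X_m=\sum_{k=0}^{m-1}(\mathrm{div}X)\circ f^{-k}$ (Lemma \ref{div_Xm}) to recombine the sum over multiples of $m$ into the full series. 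You instead modify the \emph{observable}: you keep the original spaces and push $\theta$ backward, exploiting that $f^{-N}(D)$ has slope $\alpha\nu^{2N}$ and so lands in the cone for $N$ large, then handle the first $N$ terms by elementary calculus since $\mathcal{L}_t^N 1$ is a genuine smooth function. The two devices rest on the same geometric fact (hyperbolicity flattens any non-unstable direction onto $E^s$), and your change-of-variables coefficient $\nu^k\sqrt{(1+\alpha^2)/(1+\alpha^2\nu^{4k})}$ is exactly right. What the paper's route buys is economy: once admissibility for $f^m$ is checked, Theorem \ref{linear_response} does all the analytic work, and no new dual-norm estimates are needed. What your route buys is that you never have to re-examine the construction of the anisotropic spaces for an iterate (cone invariance, the constants $K$, $K_1$, etc.), at the price of redoing a piece of the differentiation argument by hand and of the one technical point you rightly flag: that $(f_t^*)^N\theta$ lies in $(\mathcal{B}^{u-2,s+2})^*$ with norm bounded, and depends continuously on $t$, uniformly for small $t$. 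That point is genuine extra work (it amounts to writing $(f_t^*)^N\theta$ explicitly as a Dirac observable on $f_t^{-N}(D)$ with a $t$-dependent density and running a parametrized version of Proposition \ref{dim2_prop}), though only continuity at $t=0$, not Lipschitz regularity, is actually needed for your splitting to close.
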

		
		\begin{figure}[h]
			\centerline{\includegraphics[width= 15cm]{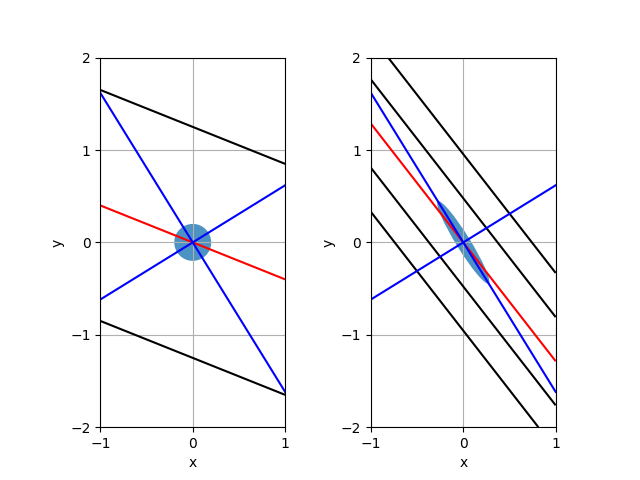}}
			\caption{Behavior of the support of a Dirac observable with $g(x,y)=y-0.4x$ under the reverse dynamics.  Black and red lines are level sets for $g$.The red line is the level set  $\{g=0\}$. The shaded area is the support of $h$. The right side picture is the image of the left side one by $f^{-1}$.}
		\end{figure}
		
		\begin{proof}
			Heuristically, the previous argument should apply whenever $D$ is in the stable cone at $0$. But, since the angle formed by the stable cone increases to be arbitrarily close to $\dfrac{\pi}{2}$ when one considers an iterate $f^m$ of $f$ with $m$ large enough, linear response may hold for observables supported on $D$ whenever it is not an unstable line. We first prove some lemmas to formalize this heuristic.
			
			\begin{lemma}
				\label{div_Xm}
				Let $m\in\mathbb{N}, m\geq 1$. Let $X_m$ be the vector field defined by $$X_m =\partial_t f^m \circ f^{-m}.$$

				Then, for all $y\in \mathbb{T}^2$:
				\begin{equation}
					(\mathrm{div}X_m)(y)=\sum_{k=0}^{m-1} (\mathrm{div}X)(f^{-k}(y))  \ .
				\end{equation}
				
			\end{lemma}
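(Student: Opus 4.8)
The plan is to first obtain an explicit formula for the vector field $X_m$ as a sum of pushforwards of $X$ under the iterates of $f$, and then to take the divergence term by term, the point being that $f$ preserves Lebesgue measure since $\mathrm{Det}F = 1$ everywhere.

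First I would differentiate the cocycle relation $f_t^m = f_t\circ f_t^{m-1}$ at $t=0$. Writing $V_m := \partial_t f_t^m|_{t=0}$ for the variation field along $f^m$, the chain rule gives the recursion $V_m(x) = X(f^m(x)) + Df_{f^{m-1}(x)}V_{m-1}(x)$, where I used that $\partial_t f_t|_{t=0} = X\circ f$ by definition of $X$, and where $V_0 = 0$. An induction on $m$, using the chain rule in the form $Df_{f^{m-1}(x)}\,D(f^k)_{f^{m-1-k}(x)} = D(f^{k+1})_{f^{m-1-k}(x)}$ to reindex the inner sum, then yields the closed form
\begin{equation*}
V_m(x) = \sum_{k=0}^{m-1} D(f^k)_{f^{m-k}(x)}\,X(f^{m-k}(x)).
\end{equation*}
Substituting $x = f^{-m}(y)$ into $X_m = V_m\circ f^{-m}$ collapses the base points and identifies $X_m$ as a sum of pushforwards,
\begin{equation*}
X_m(y) = \sum_{k=0}^{m-1} D(f^k)_{f^{-k}(y)}\,X(f^{-k}(y)) = \sum_{k=0}^{m-1} \bigl((f^k)_*X\bigr)(y).
\end{equation*}

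It then remains to take the divergence of each summand. The key identity is $\mathrm{div}\bigl((f^k)_*X\bigr) = (\mathrm{div}\,X)\circ f^{-k}$, which holds because $f$, hence $f^k$, preserves the Lebesgue measure. I would justify it by duality against a smooth test function $\phi$: integrating by parts gives $\int \mathrm{div}((f^k)_*X)\,\phi\,dy = -\int \langle (f^k)_*X,\nabla\phi\rangle\,dy$; applying the change of variables $y=f^k(x)$, whose Jacobian is $1$, together with the transpose relation $\nabla(\phi\circ f^k) = (Df^k)^{T}\,(\nabla\phi)\circ f^k$, turns this into $\int \langle X,\nabla(\phi\circ f^k)\rangle\,dx = \int \mathrm{div}(X)\,(\phi\circ f^k)\,dx = \int (\mathrm{div}\,X)\circ f^{-k}\,\phi\,dy$ for every $\phi$, whence the pointwise identity. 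Summing over $k$ gives $\mathrm{div}\,X_m = \sum_{k=0}^{m-1}(\mathrm{div}\,X)\circ f^{-k}$, which is the claim.

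The routine but error-prone part is the bookkeeping of base points in the telescoping induction for $V_m$; the genuinely essential ingredient, and the only place where the specific conservative dynamics of the cat map enters, is the volume-preservation identity for the divergence of a pushforward, which fails for a non-conservative $f$ and is precisely what makes the clean formula hold here.
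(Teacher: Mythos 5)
Your proof is correct and arrives at the result by the same overall decomposition as the paper, but it handles the key divergence computation differently. Both you and the paper first establish the identity $X_m(y)=\sum_{k=0}^{m-1}Df^k(f^{-k}(y))\,X(f^{-k}(y))$; the paper dismisses this as ``a direct computation,'' whereas you actually supply the telescoping induction from the cocycle relation $f_t^m=f_t\circ f_t^{m-1}$, which is a welcome addition. The divergence step is where the routes diverge: the paper works in the orthonormal stable/unstable coordinates $(s,u)$ of the \emph{linear} cat map, writes $f^{-k}(s,u)=(\lambda^k s,\nu^k u)$ and $Df^k=\mathrm{diag}(\nu^k,\lambda^k)$, and observes that the Jacobian factors cancel because $\lambda^k\nu^k=1$; you instead prove the general identity $\mathrm{div}\bigl((f^k)_*X\bigr)=(\mathrm{div}\,X)\circ f^{-k}$ for a volume-preserving diffeomorphism by testing against smooth functions and integrating by parts. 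The two arguments rest on the same fact ($|\mathrm{Det}\,Df|=1$), but yours is more robust: it does not use linearity of $f$ at all and would extend verbatim to any conservative Anosov map (and, with a Jacobian correction, even to the non-conservative case), at the cost of being slightly longer than the paper's two-line coordinate computation. One cosmetic remark: in your displayed chain for the duality argument the two intermediate equalities each drop a minus sign (the change of variables should give $-\int\langle X,\nabla(\phi\circ f^k)\rangle\,dx$, and the subsequent integration by parts reintroduces the sign), so the two slips cancel and the final identity $\int\mathrm{div}((f^k)_*X)\,\phi\,dy=\int(\mathrm{div}\,X)\circ f^{-k}\,\phi\,dy$ is correct as stated.
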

			
			\begin{proof}
				Let $y=(s,u)\in\mathbb{T}$ - with $(s,u)$ the coordinates in the stable and unstable directions.
							
				A direct computation shows that
				\begin{equation}
					\label{div_formula}
					X_m(y)=\sum_{k=0}^{m-1}Df^k(f^{-k}(y))X(f^{-k}(y))
				\end{equation}
				
				Let $X_s, X_u$ be the components of $X$ respectively in the stable and unstable directions --- which are orthogonal.
				Then, for $0\leq k \leq m-1$:
				\begin{align*}
				\mathrm{div}(Df(f^{-k}(y))X(f^{-k}(y)) &= \mathrm{div}\left[(\nu^k X_s + \lambda^k X_u)(\lambda^k s, \nu^k u)\right]\\
				&= \nu^k\lambda^k \partial_s  X_s(\lambda^k s, \nu^k u) + \lambda^k\nu^k \partial_u X_u(\lambda^k s,\nu^k u)\\
				&= \partial_s  X_s(\lambda^k s, \nu^k u) + \partial_u X_u(\lambda^k s,\nu^k u) \\
				&= (\mathrm{div}X)(f^{-k}(y))
				\end{align*}
				showing (\ref{div_formula}) by summing over $k$.
			\end{proof}
			
			\begin{lemma}
				\label{kappa_infty}
				For $m\in\mathbb{N}$ with $m\geq 1$, let $\kappa(m)$ be the maximal $\kappa$ usable in the definition of the stable cones for the $\mathcal{B}^{u,s}$ spaces adapted to $f^m$.
				Then $\kappa(m)$ is an increasing function and
				\begin{equation}
					\lim_{m\rightarrow +\infty}\kappa(m) = + \infty
				\end{equation}
			\end{lemma}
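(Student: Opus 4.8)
The plan is to trace through the construction of the spaces $\mathcal{B}^{u,s}$ in Section \ref{admissible_stable_leaves_ss}, identify precisely which of the conditions imposed there furnishes the upper bound on the cone parameter $\kappa$, and then show that this bound relaxes to $+\infty$ as the iterate $m$ grows. The reduction step is conceptual bookkeeping; the remaining computation is elementary.

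First I would observe that when the entire construction is carried out for $f^m$ in place of $f_0$, the stable contraction rate $\nu$ is replaced throughout by the contraction rate $\nu^m$ of $f^m$ along the stable direction, since the stable and unstable bundles of $f^m$ coincide with those of $f$. For the cat map the stable and unstable directions are globally constant and orthogonal, so the admissible charts may be taken to be isometries. Consequently the strict cone-invariance requirement (that $D_x(f^m)^{-1}\mathcal{C}^s(x)$ land in the interior of $\mathcal{C}^s((f^m)^{-1}(x))$) and the graph-transformation property ($G_i(K)\to G_i(K')$ with $K'<K$) hold for every $\kappa>0$: in the orthonormal $(s,u)$ frame a vector of slope $\le\kappa$ in the stable cone is sent by $D(f^m)^{-1}=\mathrm{diag}(\lambda^m,\nu^m)$ to a vector of slope $\le\nu^{2m}\kappa<\kappa$, so these invariances only \emph{improve} as $m$ increases and never cap $\kappa$.

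The binding constraint is therefore the explicit inequality $\nu(1+\kappa)^2\sqrt{1+4\kappa^2}<1$ of Section \ref{admissible_stable_leaves_ss} (the one guaranteeing contraction of the anisotropic norm and the solvability of $K_1>1+K_1\nu(1+\kappa)^2\sqrt{1+4\kappa^2}$), which for $f^m$ reads
\begin{equation*}
\nu^m(1+\kappa)^2\sqrt{1+4\kappa^2}<1 .
\end{equation*}
Setting $G(\kappa):=(1+\kappa)^2\sqrt{1+4\kappa^2}$, I would note that $G$ is continuous and strictly increasing on $[0,+\infty)$, with $G(0)=1$ and $G(\kappa)\to+\infty$, so $G$ is a homeomorphism of $[0,+\infty)$ onto $[1,+\infty)$. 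The maximal admissible parameter is then $\kappa(m)=G^{-1}(\nu^{-m})=G^{-1}(\lambda^m)$. Monotonicity of $\kappa(m)$ is immediate because $m\mapsto\lambda^m$ is increasing and $G^{-1}$ is increasing; and since $\lambda>1$ forces $\lambda^m\to+\infty$, we obtain $\kappa(m)=G^{-1}(\lambda^m)\to+\infty$, which is the claim.

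The one genuinely delicate point — and the part I would treat most carefully — is the assertion in the second and third paragraphs that the contraction inequality is the \emph{only} source of an upper bound on $\kappa$. This requires revisiting each place where $\kappa$ enters the Gouëzel–Liverani construction (the cone invariances, the inclusion $D_x\psi_i(\mathcal{C}_i^s)\supset\mathcal{C}^s(\psi_i(x))$, the choice $c_i\in(\kappa,2\kappa)$, and the existence of $K_1$) and verifying that, for the cat map where the charts are isometric and the hyperbolic rates are exactly $\lambda$ and $\nu=\lambda^{-1}$, each such condition is either automatic or is satisfied for all $\kappa$ with $G(\kappa)<\lambda^m$. Once this case-by-case check is in place, the conclusion follows at once from the monotonicity and surjectivity of $G^{-1}$.
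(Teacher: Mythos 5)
Your proposal is correct and follows essentially the same route as the paper: identify the two constraints on $\kappa$ (cone invariance and the inequality $(1+\kappa)^2\sqrt{1+4\kappa^2}<\nu^{-m}$), check that the first is automatic for the cat map, and read off $\kappa(m)$ as the preimage of $\lambda^m$ under the increasing function $(1+\kappa)^2\sqrt{1+4\kappa^2}$. Your explicit slope computation $\kappa\mapsto\nu^{2m}\kappa$ for the cone invariance and the careful framing via the homeomorphism $G$ merely spell out what the paper's proof asserts by ``direct computation.''
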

			\begin{proof}
				 Let $m\geq 1$. Following section \ref{admissible_stable_leaves_ss}, a real $\kappa>0$ is usable in the definition of the stable cones $\mathcal{C}^s(x,\kappa)$ for the $\mathcal{B}^{u,s}$ spaces adapted to $f^m$ if and only if it satisfies the two following conditions:
				 \begin{enumerate}
				 	\item \label{interior_cone} For all $x\in\mathbb{T}^2$, $D_xf^{-m}\mathcal{C}^s(x,\kappa)$ is in the interior of $\mathcal{C}^s(f^{-m}(x),\kappa)$;
				 	\item \label{kappa_bound} $(1+\kappa)^2\sqrt{1+4\kappa^2}<\nu^{-m} = \lambda^m$.
				 \end{enumerate}
				 
				 	By a direct computation, condition \ref{interior_cone} is always satisfied. Therefore $\kappa(m)$ is the solution of
				 	\begin{equation}
					 	(1+\kappa(m))^2\sqrt{1+4\kappa(m)^2} = \nu^{-m} = \lambda^m.
				 	\end{equation}
				 	Hence $\kappa(m)$ is  increasing and $$\lim_{m\rightarrow +\infty} \kappa(m) = + \infty \ .$$ 
				 
			\end{proof}
			
			We move back to the proof of Proposition \ref{generic_line_prop}.
			
			$\mathcal{W}_0$ is obviously, in the neighbourhood of each of its points, the graph of the function 
			\begin{equation}
				\chi: \left\{\begin{array}{rcl}
				(-\dfrac{\tau_i}{3\max(1,|\alpha|)}, \dfrac{\tau_i}{3\max(1,|\alpha|)})&\rightarrow& (-2\tau_i/3, 2\tau_i/3) \\
				s&\mapsto& \alpha s
				\end{array}\right.
			\end{equation}
			with $\tau_i$ defined as in section \ref{admissible_stable_leaves_ss}.
			
			Hence $$|D\chi|=|\alpha|$$ and $$|\chi|_{\mathcal{C}^4} \leq \alpha(1+\mathrm{diam}(\mathcal{W}_0)) \ .$$
			
			By Lemma \ref{kappa_infty}, we can define $m\geq 1$  such that $\kappa(m)>|\alpha|$. Then $\mathcal{W}_0$ is an admissible stable leaf for $f^m$ in the neighbourhood of each of its points, and thus trivially foliated by admissible stable leaves.
			
			By Theorem \ref{linear_response}, applied with $p=0$, and since $\rho_t$ is the SRB measure of $f_t^m$, the map $t\mapsto\langle\theta,\rho_t\rangle$ is differentiable at $0$ and:
			\begin{align*}
			\partial_t\langle\theta,\rho_t\rangle|_{t=0} &= -\sum_{k=0}^\infty \int_{\mathcal{W}_0} h(y)\mathcal{L}^{km}(\mathrm{div}X_m)(y)dy \\
			&= -\sum_{k=0}^\infty \int_{D} h(y)\mathcal{L}^{km}(\mathrm{div}X_m)(y)dy \\
			&= -\sum_{k=0}^\infty \int_{D} h(y)(\mathrm{div}X)(f^{-k}(y))dy
			\end{align*}
			by Lemma \ref{div_Xm}.
			
			For $k\geq 0$, the change of variable $y=f^k(x)$ along $D$ yields $$dy=\nu^k\sqrt{\dfrac{1+\alpha^2}{1+\alpha^2\nu^{4k}}}dx \ .$$
			Hence:
			\begin{equation}
				\partial_t\langle\theta,\rho_t\rangle|_{t=0} = -\sum_{k=0}^\infty \nu^k\sqrt{\dfrac{1+\alpha^2}{1+\alpha^2\nu^{4k}}}\int_{f^{-k}(D)} h(f^k(x))(\mathrm{div}X)(x)dx \ .
			\end{equation}
			\end{proof}
			
			\subsection{Dirac observables along a small circle}
				In this subsection, we prove linear response for a class of observables supported on a circle, canceling around the points where the circle is tangent to the unstable direction.
				
				This serves as a simplified example of the general situation where the observable is supported on a level set of a function near a local maximum or minimum.

				\begin{proposition}
					\label{circle_prop}
					
					Let $0<r<\dfrac{1}{4}$ and $C$ be the projection onto $\mathbb{T}^2$ of the circle $\tilde{C}$ of center $0$ and of radius $r$.
					
					Let $V$ be a small enough open neighbourhood of $C$ in $\mathbb{T}^2$. Let $g:\mathbb{T}^2\rightarrow\mathbb{R}$ be such that in the connected component of $\tilde{C}$ in $\pi^{-1}(V)$ holds $g(\pi(x,y)) = x^2 + y^2$.
					
					For $0<\epsilon<r$, let $$R_\epsilon=\{(s,u)\in\mathbb{T}^2, |s|^2<r^2-\epsilon^2\}$$ where $(s,u)$ are the local coordinates around $0$ in the stable and unstable directions.
					
					Let $h:\mathbb{T}^2\rightarrow\mathbb{R}$ be of class $\mathcal{C}^3$ such that there exist $0<\epsilon<r$ with $$\mathrm{supp}(h)\subset V\cap R_\epsilon$$ and let $\mathcal{W}_0=C\cap V$. Let $\theta=h\delta(g-r^2)$.
					
					Then the map $t\mapsto\langle\theta,\rho_t\rangle$ is differentiable at $t=0$.
					
					Furthermore, its derivative is the exponentially convergent sum
					\begin{align}
					\label{result_circle_torus}
					&\partial_t\langle\theta,\rho_t\rangle|_{t=0} \\
					= &-\sum_{k=0}^\infty \nu^k\int_{F^{-k}(\widetilde{C})} \sqrt{\dfrac{u^2+\nu^{4k}s^2}{u^2+\nu^{8k}s^2}}\widetilde{h}(F^k(s,u))(\mathrm{div}\widetilde{X})(s,u) d_{F^{-k}(\widetilde{C})}(s,u)
					\end{align}
					
					where $d_{f^{-k}(C)}(s,u)$ is the Lebesgue measure on $F^{-k}(C)$ for each $k\geq 0$ and $\widetilde{X}, \widetilde{h}$ lifts of $X$ and $h$ to $\mathbb{R}^2$.
				\end{proposition}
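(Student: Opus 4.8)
The plan is to mimic the proof of Proposition \ref{generic_line_prop}: reduce to Theorem \ref{linear_response} applied to a suitable iterate $f^m$, then unfold the resulting series by a change of variables along the circle. The only genuinely new point is that $C$ is tangent to the unstable direction at the two points $(\pm r,0)$, where it cannot be an admissible stable leaf; this is exactly why the support condition $\mathrm{supp}(h)\subset V\cap R_\epsilon$ is needed. I would first record the geometry in the orthonormal $(s,u)$ frame: since the stable and unstable directions of the cat map are orthogonal, $g = s^2+u^2$ in these coordinates, so $\mathcal{W}_0 = \{g=r^2\}$ is the circle $s^2+u^2=r^2$, tangent to the unstable axis precisely at $(\pm r,0)$. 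On the circle the defining inequality $s^2<r^2-\epsilon^2$ of $R_\epsilon$ reads $|u|>\epsilon$, so on the compact set $\mathrm{supp}(h)$ each arc is a graph $u=\pm\sqrt{r^2-s^2}$ over the stable variable with $|u'(s)|=|s|/\sqrt{r^2-s^2}\le \sqrt{r^2-\epsilon^2}/\epsilon=:\alpha_0$ and with $\mathcal{C}^{r+1}$ norm bounded in terms of $r,\epsilon$ (all derivatives remain bounded as long as $|u|>\epsilon$). Exactly as in Proposition \ref{generic_line_prop}, I would then use Lemma \ref{kappa_infty} to choose $m\ge1$ with $\kappa(m)>\alpha_0$, so that these arcs lie in the stable cone of $f^m$ and are admissible stable leaves for $f^m$. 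Picking a neighbourhood $V_h$ of $\mathrm{supp}(h)$ whose closure still avoids $(\pm r,0)$, the set $\mathcal{W}_0\cap V_h$ is (trivially, being one-dimensional) foliated by admissible stable leaves for $f^m$, and Proposition \ref{dual_prop_foliated} yields $\theta=h\delta(g-r^2)\in(\mathcal{B}^{u-2,s+2})^*$.

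With this in place I would apply Theorem \ref{linear_response} with $p=0$ to the $\mathcal{C}^r$ family $t\mapsto f_t^m$ of $\mathcal{C}^{r+1}$ Anosov diffeomorphisms, whose SRB measure is again $\rho_t$; here the transfer operator is $\mathcal{L}_0^m$, the associated vector field is $X_m$, and $\rho_0$ is Lebesgue since $f$ is volume-preserving. This gives
\begin{equation*}
\partial_t\langle\theta,\rho_t\rangle|_{t=0} = -\sum_{k=0}^\infty \int_{\mathcal{W}_0} h(y)\,\mathcal{L}^{km}(\mathrm{div}X_m)(y)\,dy \ .
\end{equation*}
Since $|\mathrm{Det}\,Df|=1$, one has $\mathcal{L}^{km}(\mathrm{div}X_m)(y)=(\mathrm{div}X_m)(f^{-km}(y))$, and Lemma \ref{div_Xm} expands this as $\sum_{j=0}^{m-1}(\mathrm{div}X)(f^{-(km+j)}(y))$. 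Summing over $k\ge0$ and $0\le j\le m-1$, the indices $km+j$ exhaust $\mathbb{N}$, so the iterate $m$ disappears and the right-hand side collapses to $-\sum_{n=0}^\infty\int_C h(y)(\mathrm{div}X)(f^{-n}(y))\,dy$.

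It then remains to perform, for each $n$, the change of variables $y=f^n(x)$ carrying $f^{-n}(C)$ onto $C$. Lifting to the universal cover and using $DF^{n}=\mathrm{diag}(\nu^n,\lambda^n)$ in the $(s,u)$ frame, I would pull the unit tangent of $C$ back to $f^{-n}(C)$ and compute the resulting arc-length Jacobian; with $\nu=\lambda^{-1}$ this reduces to the pointwise weight $\nu^n\sqrt{(u^2+\nu^{4n}s^2)/(u^2+\nu^{8n}s^2)}$, which is exactly the factor in (\ref{result_circle_torus}). Substituting produces the claimed formula, and its exponential convergence follows as in the remark preceding the proof of Proposition \ref{stable_line_prop}, the decay being governed by $r_{ess}(\mathcal{L}_0)$ on $\mathcal{B}^{u-2,s+2}$ together with $\Pi_0(\mathrm{div}X)=0$.

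The step I expect to be the main obstacle is the first one: checking that, away from $(\pm r,0)$, the arcs of $C$ satisfy the admissibility bounds $|D\chi|<c_i$ and $|\chi|_{\mathcal{C}^{r+1}}\le K$ uniformly, so that a single iterate $m$ works for all of $\mathcal{W}_0\cap V_h$, and arranging $V_h$ so that the restriction of $h$ vanishes near the endpoints of the admissible arcs (as required by Proposition \ref{dual_prop_foliated}). This is where the cancellation hypothesis $\mathrm{supp}(h)\subset R_\epsilon$ is indispensable, since $\alpha_0$, and hence the required $m$, blows up as one approaches the unstable-tangent points $(\pm r,0)$.
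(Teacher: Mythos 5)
Your proposal follows essentially the same route as the paper's proof: observe that on $\mathrm{supp}(h)\subset R_\epsilon$ the circle is a graph over the stable direction with slope bounded away from infinity, invoke Lemma \ref{kappa_infty} to pass to an iterate $f^m$ for which these arcs are admissible stable leaves, apply Theorem \ref{linear_response} (via Lemma \ref{div_Xm} to collapse the $m$-fold sum back to a sum over all $n\in\mathbb{N}$), and finish with the arc-length change of variables giving the weight $\nu^n\sqrt{(u^2+\nu^{4n}s^2)/(u^2+\nu^{8n}s^2)}$. The only differences are cosmetic — you carry the slightly sharper slope bound $\sqrt{r^2-\epsilon^2}/\epsilon$ where the paper uses $r/\epsilon$, and you spell out the re-indexing step that the paper leaves implicit — so the argument is correct and matches the paper's.
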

				
					\begin{figure}[h]
						\centerline{\includegraphics[width= 15cm]{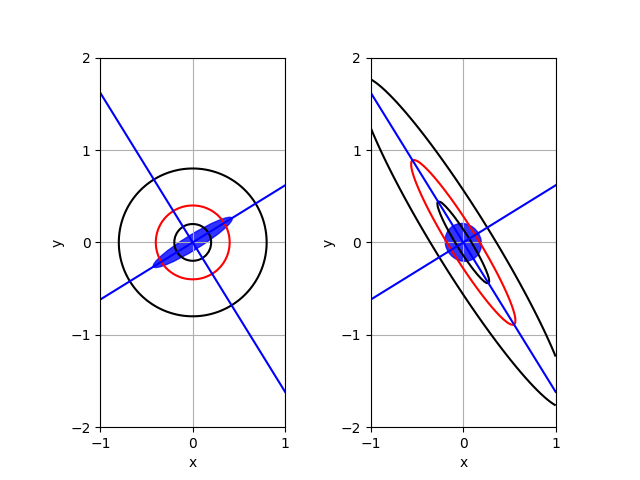}}
						\caption{Behavior of the support of a Dirac observable with $g(x,y)=x^2+y^2$ under the reverse dynamics. In the left side picture, black circles are level sets of $g$ and the red circle is the level set $\{g=0.4 ^2\}$. The shaded area is the support of $h$. Note that it excludes the region where the red circle is tangent to the unstable direction. The right side picture is the image of the left side one by $f^{-1}$.} 
					\end{figure}
				
				\begin{proof}
					Since $\mathrm{supp}(h)\subset R_\epsilon$, $\mathcal{W}_0$ is locally the graph of a function $\chi: E ^s\rightarrow E^u$ with $|D\chi|<\dfrac{r}{\epsilon}$. As in the proof of Proposition \ref{generic_line_prop}, up to considering an iterate of $f$, this shows that $W$ is foliated by admissible stable leaves.
					
					Therefore, $t\mapsto\langle\theta,\rho_t\rangle$ is differentiable at $t=0$ and:
					\begin{align*}
					\partial_t\langle\theta,\rho_t\rangle|_{t=0} &= -\sum_{k=0}^\infty \int_{C} h(y)(\mathrm{div}X)(f^{-k}(y))dy\\
					&=  -\sum_{k=0}^\infty \int_{\widetilde{C}} \widetilde{h}(s,u)(\mathrm{div}\widetilde{X})(F^{-k}(s,u))d_{\widetilde{C}}(s,u) \ .
					\end{align*}
					The change of variables $(s,u)=F^k(s,u)$ yields $$d_{\widetilde{C}}(s,u)=\nu^k\sqrt{\dfrac{1+\nu^{4k}\dfrac{s^2}{u^2}}{1+\nu^{8k}\dfrac{s^2}{u^2}}}d_{F^{-k}(\widetilde{C})}(s,u) \ .$$
					
					Therefore:
					\begin{align*}
					&\partial_t\langle\theta,\rho_t\rangle|_{t=0} \\
					=& -\sum_{k=0}^\infty \nu^k\int_{F^{-k}(\widetilde{C})} \sqrt{\dfrac{u^2+\nu^{4k}s^2}{u^2+\nu^{8k}s^2}}\widetilde{h}(F^k(s,u))(\mathrm{div}\widetilde{X})(s,u)d_{F^{-k}(\widetilde{C})}(s,u) \ .
					\end{align*}
				\end{proof}
		
	\bibliographystyle{abbrv}
	\bibliography{linear_response}
\end{document}